\newtheorem{teor}{Theorem}[section]
\newtheorem{lemma}[teor]{Lemma}
\newtheorem{defin}[teor]{Definition}
\theoremstyle{definition}
\newtheorem{rk}[teor]{Remark}
\newtheorem{example}[teor]{Example}
\newcommand{\R}{\mbox{${\mathbb R}$}}
\newcommand{\C}{\mbox{${\mathbb C}$}}
\newcommand{\Z}{\mbox{${\mathbb Z}$}}
\newcommand{\w}{\mbox{${\omega}$}}
\newcommand{\cpi}{\mbox{${\mathbb {CP}^1}$}}
\begin{document}

\title{\bf Upper bound for the Gromov width of coadjoint orbits of type A}
\author{Alexander Caviedes Castro}
\email{alexander.caviedescastro@mail.utoronto.ca}

\date{}

\thanks{This research is partially supported by the Natural Sciences and
Engineering Research Council of Canada}

\begin{abstract}
We find an upper bound for the Gromov width of coadjoint orbits of
$U(n)$  with respect to the Kirillov-Kostant-Souriau symplectic form
by computing certain Gromov-Witten invariants. The approach
presented here is closely related to the one used by Gromov in his
celebrated non-squeezing theorem.
\end{abstract}

\maketitle

\begin{center}
\textit{This is a preliminary version. Comments are welcome.}
\end{center}

\section{Introduction}

The Darboux theorem in symplectic geometry states that around any
point of a symplectic manifold, there is a system of local
coordinates such that the symplectic manifold looks locally like
$\C^n$ with its canonical symplectic form. A natural and fundamental
problem in symplectic geometry is to know how far we can extend
symplectically these coordinates in the symplectic manifold. This is
how the concept of Gromov's width arises. The Gromov width of a
symplectic manifold $(M, \w)$ is defined as
$$
\operatorname{Gwidth}(M, \w)=\sup{\{\pi r^2: \exists \text{ a
symplectic embedding } B_{2n}(r) \hookrightarrow M \}}.
$$
Roughly speaking, the Gromov width of a symplectic manifold is a
measure of its symplectic size. Gromov's width was first introduced
by Gromov in \cite{gromov} and it has lead to the notion of
symplectic capacities \cite{hofer}.

It is interesting to know how big or small can be the Gromov width.
For example, it has been conjectured by Paul Biran that if the
cohomology class of the symplectic form of a symplectic manifold is
integral, then the Gromov width of the symplectic manifold is at
least one. On the other hand, the Gromov non-squeezing theorem gives
us insights of how restrictive is the Gromov width from above:

\textbf{Gromov's non-squeezing Theorem} \textit{If $\rho$ is a
symplectic embedding of the ball $B_{2n}(r)$ of radius $r$ into a
cylinder $B^2(\lambda)\times \R^{2n-2}$ of radius $\lambda,$ then
$r\leq \lambda.$ In particular,
$$\operatorname{Gwidth}(B^2(\lambda)\times \R^{2n-2}) = \pi
\lambda^2.$$}

Gromov's non-squeezing Theorem is frequently considered as a
classical mechanics counterpart of the Heisenberg's Uncertainty
Principle \cite{gosson}.

Gromov proved the non-squeezing theorem in \cite{gromov}, where he
established the co\-nnec\-tion between $J$-holomorphic curves and
sympletic geometry. Since then, several authors have used Gromov's
method for bounding the Gromov width of other families of symplectic
manifolds, such as G. Lu for symplectic toric manifolds in
\cite{glu2}, Yael Karshon and Susan Tolman for complex Grassmannians
manifolds in \cite{karshon} and Masrour Zoghi for regular coadjoint
orbits in \cite{masrour} (see also McDuff-Polterovich
\cite{spacking}, Biran \cite{biran}).

In this paper, we are particularly interested in finding upper
bounds for the Gromov width of general coadjoint orbits of $U(n).$
We identify the Lie algebra $\mathfrak{u}(n)$ of $U(n)$ with its
dual $\mathfrak{u}(n)^*$ via the invariant inner product defined by
the formula
$$
(X, Y)=\operatorname{Trace}{XY}.
$$
The mapping $h \mapsto ih$ is an isomorphism from the real vector
space of Hermitian matrices $\mathcal{H}:=i\mathfrak{u}(n)$ onto the
real vector space of skew-Hermitian matrices $\mathfrak{u}(n).$ This
isomorphism together with the invariant inner product allow us to
identify $\mathcal{H}$ with $\mathfrak{u}(n)^*,$ and a set of
Hermitian matrices that share the same spectrum with a coadjoint
orbit of $U(n),$ i.e., for $\lambda=(\lambda_1, \cdots,
\lambda_n)\in \R^n$ there exists a coadjoint orbit of $U(n)$ which
can be identified with $\mathcal{H}_\lambda:=\{A \in M_n(\C): A^*=A,
\operatorname{spectrum}{A}=\lambda\}.$ In this case, we can endow
$\mathcal{H}_\lambda$ with a symplectic form $\w_\lambda$ coming
from the Kostant-Kirillov-Souriau symplectic form defined on the
coadjoint orbit of $U(n)$.

The main result obtained in this paper is that if there are $i, j$
such that any difference of eigenvalues $\lambda_{i'}-\lambda_{j'}$
is an integer multiple of $\lambda_{i}-\lambda_{j},$ then
$$
\operatorname{Gwidth}(\mathcal{H}_\lambda, \w_\lambda) \leq
|\lambda_i-\lambda_j|.
$$

This result is an extension of one that Masrour Zoghi has obtained
in his Ph.D thesis \cite{masrour}, where he has considered the
problem of determining the Gromov width of \textit{regular coadjoint
orbits} of compact Lie groups. Recall that a coadjoint orbit of a
compact Lie group is regular if the stabilizer of any element of it
under the coadjoint action is a maximal torus of the compact Lie
group. When the compact Lie group is the group of unitary matrices
$U(n),$ a coadjoint orbit is regular if and only if it can be
identified with a set of the form $\mathcal{H}_\lambda$ with all the
components of $\lambda\in \R^n$ being pairwise different.  Our
results are extended to coadjoint orbits of $U(n)$ that are not
necessarily regular.

We expect to obtain a similar result for \textit{any} coadjoint
orbit of \textit{any} simple compact Lie group, but this would be
described in a later paper.

This paper is organized as follows: we first introduce the necessary
$J$-holomorphic tools that we will use throughout the text, and we
then explain how upper bounds for the Gromov width of symplectic
manifolds manifolds can be given by a non-vanishing Gromov-Witten
invariant.

Then we show how upper bounds for the Gromov width of Grassmannian
manifolds can be found by computing certain Gromov-Witten invariant.
The problem of finding the Gromov width for Grassmannians manifolds
has been already considered and solved independently by Yael Karshon
and Susan Tolman in \cite{karshon} and by Guangcun Lu in \cite{glu}.
The ideas presented in this paper are similar in nature to the ones
used by Karshon and Tolman in their paper.

Finally, we show how these considerations about Grassmannian
manifolds would be particularly useful for working out the most
general problem of determining upper bounds for the Gromov width of
partial flag manifolds. The reason of this is that in some
particular cases computations of Gromov-Witten invariants for
partial flag manifolds can be reduced to computations of
Gromov-Witten invariants for Grassmannians manifolds.

We suggest to the reader to compare our results with the results
obtained by Milena Pabiniak in \cite{milena}, where she considers
the problem of determining \textit{lower bounds} for the Gromov
width of coadjoint orbits of $U(n)$ by using equivariant techniques
of symplectic geometry. In her paper, Pabiniak proves that for
$\lambda=(\lambda_1, \cdots, \lambda_n) \in \R^n$ of the form

\begin{equation}\label{milena}
\lambda_1
> \lambda_2> \cdots > \lambda_l = \lambda_{l+1} = \cdots = \lambda_{l+s} > \lambda_{l+s+1} > \cdots > \lambda_n; s \geq
0,
\end{equation}
the Gromov width of $(\mathcal{H}_\lambda, \w_\lambda)$ is at least
the minimum $\min\{\lambda_i-\lambda_j:\lambda_i > \lambda_j\}.$
This result together with the one obtained in this paper, implies
that if $\lambda\in \R^n$ is of the form (\ref{milena}) and if there
are $i, j$ such that any difference of the form
$\lambda_{i'}-\lambda_{j'}$ is an integer multiple of
$\lambda_{i}-\lambda_{j},$ then
$$
\operatorname{Gwidth}(\mathcal{H}_\lambda,
\w_\lambda)=|\lambda_{i}-\lambda_{j}|;
$$
suggesting that the upper bound that we have found is indeed the
Gromov width of $(\mathcal{H}_\lambda, \w_\lambda).$

\textbf{Acknowledgments} I would like to thank to Yael Karshon for
letting me know about this problem and for encouraging me during the
writing process of this paper. I also would like to thank to Milena
Pabiniak for useful conversations.

\section{$J$-holomorphic curves}

Pseudoholomorphic theory has been one of the main tools used in
symplectic geometry since Gromov introduced them in \cite{gromov}
where he proved his celebrated non-squeezing theorem.  We want to
apply similar ideas for finding upper bounds for the Gromov width of
coadjoint orbits of type A, or partial flag manifolds. In this
section we give a short review of pseudoholomorphic theory and
Gromov-Witten invariants, and we show how they are related with the
Gromov width of a symplectic manifold.

\subsection{Pseudoholomorphic theory}

Let $(M^{2n}, \w)$ be a symplectic manifold. An almost complex
structure $J$ of $(M, \w)$ is a smooth operator $J:TM\to TM$ such
that $J^2=-Id.$ We say that an almost complex structure $J$ is
\textbf{compatible} with $\w$ if the formula
$$
g(v, w):=\w(v, Jw)
$$
defines a Riemannian metric. We  denote the space of $\w$-compatible
almost complex structures by $\mathcal{J}(M, \w).$

Let $(\cpi, j)$ be the Riemann sphere with its standard complex
structure $j.$ Let $J\in \mathcal{J}(M, \w).$ A map $u:\cpi \to M$
is called a \textbf{$J$-holomorphic curve of genus zero} or simply a
\textbf{$J$-holomorphic curve}  if
$$
J\circ du = du \circ j.
$$

The nonlinear Cauchy Riemman operator $\bar{\partial}$ is defined
using the formula
\begin{align*}
\bar{\partial}_J:C^{\infty}(\cpi, M) &\to \bigcup_{u\in
C^{\infty}(\mathbb{CP}^1, M)} \Omega^{0, 1}(\cpi, u^*TM) \\ u
&\mapsto \dfrac{1}{2}(d u+J\circ d u \circ j)
\end{align*}
where the codomain is considered as a bundle over $C^{\infty}(\cpi,
M),$ $\bar{\partial}_J$ is considered as a section of this bundle,
$u\in C^{\infty}(\cpi, M)$ and $u^*TM=\{(z, v):z\in \cpi, v\in
T_{u(z)}M\}.$

A curve $u:\cpi \to M$ is said to be \textbf{multiply covered} if it
is the composite of a holomorphic branched covering map $(\cpi,
j)\to (\cpi, j)$ of degree greater than one with a $J$-holomorphic
map $\cpi \to M.$ It is \textbf{simple} if it is not multiply
covered.

Given a compact symplectic manifold $(M^{2n}, \w),$ a compatible
almost complex structure $J,$ and a second homology class $A\in
H_2(M, \Z),$ we define the \textbf{moduli space of simple
$J$-holomorphic curves of degree $\mathbf{A}$} as
$$
\mathcal{M}_A^*(M, J)=\{u:\cpi \to M: J\circ du =du \circ j,
u_*[\cpi]=A, u \text{ is simple}\}.
$$

The almost complex structure $J$ is called \textbf{regular for}
$\mathbf{A}$ if for every $u\in \mathcal{M}_A^*(M, J)$ such that
$\bar{\partial}_J u=0,$ the vertical differential of the nonlinear
Cauchy-Riemann operator $\bar{\partial}_J$ at the point $u$ is
surjective onto $\Omega^{0, 1}(\cpi, u^*TM).$ If an almost complex
structure $J$ is regular for every $A \in H_2(M, \Z),$ then it will
simply be called \textbf{regular.} The set of regular
$\w$-compatible almost complex structures is residual in the set
$\mathcal{J}(M, \w)$ of compatible almost complex structures, i.e.,
it contains a countable intersection of open dense sets with respect
to the $C^{\infty}$ topology.

If $J$ is a regular almost complex structure, then the moduli space
$\mathcal{M}_A^*(M, J)$ is a smooth oriented manifold of dimension
equal to $\dim{M}+2c_1(A),$ where $c_1$ denotes the first Chern
class of the bundle $(TM, J)$ \cite{mcduff}.

\begin{example}
If $(M, \w, J)$ is a compact K\"ahler manifold and $G$ is a Lie
group such that acts transitively on $M$ by holomorphic
diffeomorphism, then the almost complex structure $J$ is regular
\cite[Proposition 7.4.3]{mcduff}.
\end{example}

A homology class $B\in H_2(M)$ is \textbf{spherical} if it is in the
image of the Hurewicz homomorphism $\pi_2(M)\to H_2(M).$ A homologiy
class $B\in H_2(M)$ is $\w$-\textbf{indecomposable} if it does not
decompose as a sum $B=B_1+\cdots +B_k$ of spherical classes such
that $\w(B_i)>0.$ Gromov's compactness theorem \cite{mcduff} implies
that when $A\in H_2(M, \Z)$ is a $\w$-indecomposable homology class
and $J$ is a regular almost complex structure,  the moduli space
$\mathcal{M}_A(M, J)/PSL(2, \C)$ of unparametrized $J$-holomorphic
curves of degree $A$ is compact.

In general, moduli spaces of pseudoholomorphic curves are not
compact but can be compactified by adding sets of stable maps
\cite{mcduff}.

The \textbf{moduli space of simple $J$-holomorphic curves of degree
$A$ with $k$-marked points} is defined by
$$
\mathcal{M}_{A, k}^*(M, J)=\mathcal{M}_A^*(M, J)\times_{PSL(2,
\mathbb{C})}(\cpi)^k
$$
where $PSL(2, \C)$ acts on the right factor by its natural action on
$\cpi$ and on the left factor by reparametrization. When $k=0,$ we
define $\mathcal{M}_{A, 0}^*(M, J)$ as being equal to
$\mathcal{M}_A(M, J)/PSL(2, \C).$ We also have an \textbf{evaluation
map}
$$
\operatorname{ev}^{k}_J:= \mathcal{M}_{0, k}^*(M, A,
J)=\mathcal{M}^*(M, A, J)\times_{PSL(2, \mathbb{C})}(\cpi)^k \to M^k
$$
defined by
$$
\operatorname{ev}^k_J[u, z_1, \cdots, z_k]=(u(z_1), \cdots, u(z_k)).
$$

A smooth homotopy of almost complex structures is a smooth family $t
\mapsto J_t, t\in [0, 1].$ For any such homotopy define
$$
\mathcal{M}^*_{A, k}(M, \{J_t\}_t)=\{(t, u):u\in \mathcal{M}^*_{A,
k}(M, J_t)\}.
$$

Given two regular $\w$-compatible almost complex structures $J_0,
J_1$ we always can find a smooth homotopy of almost complex
structures $\{J_t\}_t$ connecting them such that the space
$\mathcal{M}^*_{A, k}(M, \{J_t\}_t)$ is a smooth o\-rien\-ted
manifold of dimension $\dim{M}+2c_1(A)+2k-5$ with boundary
$\mathcal{M}^*_{A, k}(M, J_1)\sqcup \mathcal{M}^*_{A, k}(M, J_0),$
and with a smooth evaluation map
$$
\operatorname{ev}^{k}_{J_t}:\mathcal{M}_{A, k}^*(M, \{J_t\}_t)\to M
$$
such that
$$
\operatorname{ev}^k_{J_t}|_{\partial \mathcal{M}_{A, k}^*(M,
\{J_t\}_t)}=\operatorname{ev}_{J_0}^k\sqcup
\operatorname{ev}_{J_1}^k:\mathcal{M}_{A, k}^*(M,
J_1)-\mathcal{M}_{A, k}^*(M, J_0) \to M.
$$

\subsection{Gromov's width}
\begin{defin}
Given a symplectic manifold $(M^{2n}, \w),$ its Gromov's width is
defined as
$$
\operatorname{Gwidth}(M, \w)=\sup{\{\pi r^2: \exists\text{ a
symplectic embedding }  B_{2n}(r) \hookrightarrow M \}}.
$$
\end{defin}

The Darboux theorem implies that the Gromov width of a symplectic
manifold is always positive. Moreover, if the symplectic manifold is
compact, its Gromov's width is finite.

\begin{teor}\label{nsq}
Let $(M^{2n}, \w)$ be a compact symplectic manifold, and $A\in
H_2(M, \Z)\backslash \{0\}$ a second homology class. Suppose that
for a dense subset of smooth $\w$-compatible almost complex
structures, the evaluation map
$$
\operatorname{ev}_J^1:\mathcal{M}_{A, 1}^*(M, J) \to M
$$
is onto. Then for any symplectic embedding $B_{2n}(r)\hookrightarrow
M,$ we have
$$
\pi r^2 \leq \w(A),
$$
where $\w(A)$ denotes the symplectic area of $A.$ In particular,
$$
\operatorname{Gwidth}(M, \w)\leq \w(A).
$$
\end{teor}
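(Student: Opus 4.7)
The plan is to imitate Gromov's original non-squeezing argument: use the density hypothesis to produce a $J^*$-holomorphic sphere of class $A$ through the centre of any symplectically embedded ball, and then bound its symplectic area from below by the monotonicity formula for minimal surfaces. Fix a symplectic embedding $\phi:B_{2n}(r)\hookrightarrow M$ and a slightly smaller radius $r'<r$. Since the fibres of the bundle of $\w$-compatible almost complex structures are contractible, one may construct $J^*\in\J(M,\w)$ that agrees with $\phi_*J_0$ on $\phi(B_{2n}(r'))$, where $J_0$ denotes the standard complex structure on $\C^n$.

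By the density assumption, choose a sequence $J_k\to J^*$ in $\J(M,\w)$ for which $\operatorname{ev}^1_{J_k}:\M^*_{A,1}(M,J_k)\to M$ is surjective, and for each $k$ pick a simple $J_k$-holomorphic sphere $u_k:\cpi\to M$ of class $A$ whose image contains $\phi(0)$. The uniform energy bound $\w(A)$ lets me apply Gromov's compactness theorem to extract a $J^*$-holomorphic stable map of class $A$ whose image still contains $\phi(0)$. Let $u:\cpi\to M$ denote one of its irreducible components with $\phi(0)\in u(\cpi)$. Transporting via $\phi$, the open set $\Sigma:=u^{-1}(\phi(B_{2n}(r')))\subset\cpi$ is mapped by $v:=\phi^{-1}\circ u|_\Sigma$ holomorphically (because $J^*=\phi_*J_0$ on the image ball) into $B_{2n}(r')\subset\C^n$, some point of $\Sigma$ going to the origin and the boundary of $\Sigma$ accumulating on $\partial B_{2n}(r')$. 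Since $\phi$ is a symplectomorphism, the metric $g(\cdot,\cdot)=\w(\cdot,J^*\cdot)$ pulls back to the flat Euclidean metric on $B_{2n}(r')$, so $v(\Sigma)$ is a (possibly branched, possibly with multiplicity) minimal surface in the standard ball through $0$. The monotonicity formula then yields
\[
\pi r'^2 \;\leq\; \operatorname{Area}(v(\Sigma)) \;=\; \int_\Sigma u^*\w \;\leq\; \w(A),
\]
where the middle equality uses that on a $J^*$-holomorphic curve Riemannian area coincides with symplectic area. Letting $r'\nearrow r$ gives $\pi r^2\leq\w(A)$, and taking the supremum over admissible $r$ yields the bound on $\operatorname{Gwidth}(M,\w)$.

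The main obstacle is the compactness step: one must single out an irreducible $J^*$-holomorphic component of the bubble tree whose image still contains $\phi(0)$, and verify that its $\w$-area is at most $\w(A)$. This is where the structure of stable maps pays off: every component has strictly positive symplectic area, and the component classes sum to $A$, so the chosen component contributes no more than $\w(A)$. A secondary subtlety is applying monotonicity when $v$ is not an embedding; the standard remedy is to view $v(\Sigma)$ as a minimal surface counted with multiplicity, for which the ratio $\operatorname{Area}(v^{-1}(B(s)))/\pi s^2$ is still monotone non-decreasing in $s$ and tends to a positive integer at the origin.
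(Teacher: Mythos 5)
Your proposal is correct and follows essentially the same route as the paper's proof: adapt the almost complex structure to the embedded ball, use the density hypothesis plus Gromov compactness to produce a pseudoholomorphic sphere (or a component of a stable map of class $A$, hence of area at most $\w(A)$) through the centre, and then apply the monotonicity/Lelong lower bound $\pi r'^2$ for proper minimal surfaces through the origin of a Euclidean ball. Your added remarks on ghost components and on monotonicity for non-embedded curves are reasonable refinements of details the paper also treats as standard.
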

\begin{proof}
Suppose that there is symplectic embedding
$$
\rho:B_{2n}(r)\hookrightarrow M.
$$
Fix an $\epsilon \in (0, r),$ let $\tilde{J}$ be an $\w$-compatible
complex structure on $M$ that equals $\rho_*(J_{st})$ on the open
subset $\rho(B_{2n}(r-\epsilon))\subset M.$

We claim that there exists a $\tilde{J}$-holomorphic curve
$\tilde{u} \in \mathcal{M}_B^*(M,\tilde{J})$ and $z\in \cpi$ with
$\operatorname{ev}_{\tilde{J}}^1[\tilde{u}, z]=
\tilde{u}(z)=\rho(0),$ where $0\in B_{2n}(r-\epsilon)$ is the centre
of the ball and $B \in H_2(M)$ satisfies $\w(B)\leq \w(A):$ If
$\tilde{J}$ is one of the almost complex structures for which
$\operatorname{ev}_{\tilde{J}}^1$ is onto, then we are done.
Otherwise, consider a sequence of $\w$-compatible almost complex
structures $\{J_k\}_{k=1}^{\infty}$ that $C^{\infty}$-converge to
$\tilde{J}$ and for which $\operatorname{ev}^1_{J_k}$ is onto and
choose $u_k:\cpi \to M$ such that $\rho(0)\in u_k(\cpi).$ By
Gromov's compactness, the sequence $\{(u_k, J_k)\}$ has a
subsequence $\{(u_l', J_l')\}_{l=1}^{\infty}\subset \{(u_k, J_k)\}$
Gromov converging to a stable map
$$
u^s:\cpi\sqcup\cdots \sqcup \cpi \to M
$$
whose image contains $\rho(0).$ Now, let $\tilde{u}:\cpi \to M$ be
the restriction of $u^s$ to the component of the domain of $u^s$
that contains the marked point. Moreover, let
$B=\tilde{u}_*([\cpi]),$ then it satisfies $$\w(B)\leq \w(A).$$

Since $\tilde{u}$ is $\tilde{J}$-holomorphic, its restriction to
$S:=\tilde{u}^{-1}(\rho(B_{2n}(r-\epsilon)))\subset \cpi$ gives a
proper holomorphic curve $u':S \to B_{2n}(r-\epsilon)$ that passes
through the origin. By an standard fact in minimal surface theory,
the area of this holomorphic curve is bounded from below by
$\pi(r-\epsilon)^2,$ whereas $\operatorname{area}(u')\leq
\operatorname{area}(\tilde{u})=\w(B)\leq \w(A),$ and so
$\pi(r-\epsilon)^2 \leq \w(A).$ Since this equality is true for all
$\epsilon >0,$ we conclude that
$$
\pi r^2 \leq \w(A).
$$
\end{proof}

In order to find upper bounds for the Gromov width of a symplectic
manifold $(M, \w)$, we want to prove that for generic
$\w$-compatible almost complex structures $J,$ the evaluation map
$$
\operatorname{ev}^1_J:\mathcal{M}_{A, 1}^*(M, J) \to M
$$
is onto. One way to achieve the ontoness of the evaluation map is
for example by proving that a Gromov-Witten invariant with one of
its constraints being a point is different from zero.

Gromov-Witten invariants are well defined, at least if we assume
that either the symplectic manifold $(M, \w)$ is semipositive or the
the homology class $A\in H_2(M; \Z)$ is $\w$-indecomposable, a
symplectic manifold $(M, \w)$ is \textbf{semipositive} if, for a
spherical homology class $A$ with positive symplectic area,
$c_1(A)\geq 3-n$ implies $c_1(A)\geq 0.$ In these cases, for a
regular almost complex structure $J$ of $(M, \w),$ the evaluation
map
$$
\operatorname{ev}_J^{k}:\mathcal{M}_{A, k}^*(M, J) \to M^k
$$
represents a pseudocycle, i.e., its image can be compactified by
adding a set of codimension at least two.

If $a_i \in H^*(M)$ are co\-ho\-mo\-lo\-gy classes Poincar\'e dual
to compact oriented submanifolds $X_i \subset M,$ the
\textbf{Gromov-Witten invariant} $\operatorname{GW}^J_{A, k}(a_1
\cdots a_k)$ is the number of $J$-holomorphic spheres in the class
$A$ passing through the submanifolds $X_i$ (after possibly
perturbing them) and counted with appropriate signs. More precisely,
if $\sum_{i=1}^k \deg{a_i} =\dim{\mathcal{M}_{A, k}^*(M, J)}$ and
the moduli space $\mathcal{M}_{A, k}^*(M, J)$ is endowed with a
suitable orientation (see, e.g., \cite[Section A.2]{mcduff}); the
Gromov-Witten invariant is defined as the intersection oriented
number
$$
\operatorname{GW}^J_{A, k}(a_1\cdots a_k):=\sharp
\operatorname{ev}_J^{k}\pitchfork(X_1\times \cdots\times X_k).
$$
If we do not orient the moduli space $\mathcal{M}_{A, k}^*(M, J)$,
we can still define Gromov-Witten invariants over $\Z_2.$
Gromov-Witten invariants $\operatorname{GW}^J_{A, k}$ are
well-defined, finite and independent of the regular almost complex
structure $J$ \cite[Theorem 7.1.1, Lemma 7.1.8]{mcduff}.

\begin{rk}\label{rk1}
Note that if there exist cohomology classes $a_1, \cdots, a_k$ and a
suitable regular almost complex structure $J$ such that
$\operatorname{GW}_{A, k}^J(a_1 \cdots a_k)\ne 0$ and $a_1$ is
Poincar\'e dual to the fundamental class of a point, then for a
generic choice of almost complex structure $J',$ the evaluation map
$$
\operatorname{ev}^1_{J'}:\mathcal{M}_{A, 1}^*(M, J') \to M
$$
is onto, which, by Theorem \ref{nsq}, implies that
$$
\operatorname{Gwidth}(M, \w) \leq \w(A).
$$
\end{rk}

\begin{rk}
Gromov-Witten invariants for symplectic manifolds can be defined in
wide generality by associating to the moduli spaces of
$J$-holomorphic curves \textit{virtual fundamental classes} with
rational coefficients (Li-Tian \cite{litian}, Fukaya-Ono
\cite{fukayaono}, Ruan \cite{ruan}, Siebert \cite{siebert},
Hofer-Wysocki-Zehnder \cite{hofer2}, \cite{hofer3}). We will no make
use of this de\-fi\-ni\-tion since we want to keep as simple and
self-contained as possible the presentation of this paper. However,
with this definition we would not need to assume that either the
symplectic manifold is semipositive or the homology class $A$ is
indecomposable, and the results of Theorem \ref{caviedes} can be
extended to \textit{any} coadjoint orbit of type A.
\end{rk}

\section{Coadjoint orbits of type A}\label{typeA}

The coadjoint orbits of a compact Lie group are endowed with a
symplectic form known as the KostantKirillov-Souriau form. We wish
to apply to this family of symplectic manifolds, pseudoholomorphic
tools for studying the Gromov width. We focus our attention in
coadjoint orbits of type A, or partial flag manifolds. In this
section we recall some general statements about coadjoint orbits.

Let $G$ be a compact Lie group, $\mathfrak{g}$ be its Lie algebra,
and $\mathfrak{g}^*$ be the dual of the Lie algebra $\mathfrak{g}$.
The compact Lie group $G$ acts on $\mathfrak{g}^*$ by the coadjoint
action. Let $\xi \in \mathfrak{g}^*$ and $\mathcal{O}_\xi$ be the
coadjoint orbit through $\xi.$

The coadjoint orbit $\mathcal{O}_\xi$ carries a symplectic form
defined as follows: for $\xi \in \mathfrak{g}^*$ we define a skew
bilinear form on $\mathfrak{g}$ by
$$
\w_{\xi}^{KKS}(X, Y)=\langle\xi, [X, Y] \rangle.
$$
The kernel of $\w_\xi^{KKS}$ is the Lie algebra $\mathfrak{g}_\xi$
of the stabilizer of $\xi \in \mathfrak{g}^*$ for the coadjoint
representation. In particular, $\w_\xi^{KKS}$ defines a
nondegenerate skew-symmetric bilinear form on
$\mathfrak{g}/\mathfrak{g}_\xi,$ a vector space that can be
identified with $T_\xi(\mathcal{O}_\xi)\subset \mathfrak{g}^*.$ The
bilinear form $\w_\xi^{KKS}$ induces a closed, invariant,
nondegenerate 2-form on the orbit $\mathcal{O}_\xi,$ therefore
defining a symplectic structure on $\mathcal{O}_{\xi}.$ This
symplectic form is known as the \textbf{Kostant-Kirillov-Souriau
form} of the coadjoint oribt.

Let us assume now that $G=U(n).$ Let $\mathfrak{u}(n)$ be the Lie
algebra of $U(n)$, $\mathfrak{u}(n)^*$ be its dual and
$\mathcal{H}=\{A\in M_n(\C):A^*=A\}$ be the set of Hermitian
matrices.

The group of unitary matrices $U(n)$ acts by conjugation on
$\mathcal{H}$. The Hermitian matrices $\mathcal{H}$ have real
eigenvalues and are diagonalizable in a unitary basis, so that the
orbits of this action correspond to sets of matrices in
$\mathcal{H}$ with the same spectrum. Let $\lambda=(\lambda_1,
\cdots, \lambda_n) \in\R^n$ and $\mathcal{H}_{\lambda}=\{A\in
M_n(\C): A^*=A, \operatorname{spectrum}{A}=\lambda\}$ be the
$U(n)$-orbit of the matrix $\operatorname{diagonal}(\lambda_1,
\cdots, \lambda_n)$ in $\mathcal{H}.$

We identify $U(n)$-orbits in $\mathcal{H}$ with adjoint orbits in
$\mathfrak{u}(n)$ by sending a matrix $A \in \mathcal{H}$ to the
matrix $iA\in \mathfrak{u}(n).$ The pairing in
$\mathfrak{u}(n)=i\mathcal{H} $ defined by
$$
(X, Y)=\operatorname{Trace}(XY)
$$
allows us to identify $\mathfrak{u}(n)$ with $\mathfrak{u}(n)^*,$
and adjoint orbits in $\mathfrak{u}(n)$ with coadjoint orbits in
$\mathfrak{u}(n)^*.$ So that, $U(n)$-orbits in $\mathcal{H}$ can be
identified with coadjoit orbits in $\mathfrak{u}(n)^*.$

Under these identifications, for $\lambda \in \R^n,$
$\mathcal{H}_\lambda$ can be identified with a coadjoint orbit in
$\mathfrak{u}(n)^*.$ In this case, we define a symplectic form
$\w_\lambda$ on $\mathcal{H}_\lambda$ by pulling back the
Kirillov-Kostant-Souriau form defined on the coadjoint orbit. We
also endow $\mathcal{H}_\lambda$ with a complex structure
$J_\lambda,$ coming from the presentation of $\mathcal{H}_\lambda$
as a quotient of complex Lie groups $Sl(n, \C)/P,$ where $P\subset
Sl(n, \C)$ is a parabolic subgroup of block upper triangular
matrices. The triple $(\mathcal{H}_\lambda, \w_\lambda, J_\lambda)$
is a K\"ahler manifold and the Lie group $Sl(n, \C)$ acts
holomorphically and transitively on $\mathcal{H}_\lambda$ by
conjugation.

Let $\{e_i\}_{i=1}^n$ denote the standard basis of $\R^n.$ Let
$T=U(1)^n\subset U(n)$ be the standard maximal torus of $U(n)$ and
$\mathfrak{t}\cong \R^n$ be its Lie algebra. We identify
$\mathfrak{t}^*$ with $\mathfrak{t}$ via its standard inner product
so that the standard basis $\{e_i\}_{i=1}^n$ of $\mathfrak{t}\cong
\R^n$ is identified with the standard basis of projections of
$\mathfrak{t}^*,$ which is also the standard basis (as a
$\Z$-module) of the weight lattice $\operatorname{Hom}(T,
S^1)\subset \mathfrak{t}^*.$

The restricted action of $T\subset U(n)$ on $\mathcal{H}_{\lambda}$
is Hamiltonian with momentum map
\begin{align*}
\mu: \mathcal{H}_{\lambda} &\to \mathfrak{t}^*\simeq \R^n\\(a_{ij})
&\mapsto(a_{11}, \cdots, a_{nn}).
\end{align*}
The image of the momentum map is the convex hull of the momentum
images of the fixed points of the action of $T$ on
$\mathcal{H}_{\lambda},$ i.e., the image of $\mu$ is the convex hull
of all possible permutations of the vector $(\lambda_1, \cdots,
\lambda_n)$ (see, e.g., \cite[Chapter III]{audin},
\cite{guillemin}).

The $U(n)$-orbit $\mathcal{H}_\lambda$ together with the torus $T$
action is a \textbf{GKM space}, i.e., the closure of every connected
component of the set $\{x\in
\mathcal{H}_\lambda:\dim_{\mathbb{C}}{(T\cdot x)}=1\}$ is a sphere
(see \cite{tymoczko}, \cite{gkm}). The closure of $\{x\in
\mathcal{H}_\lambda:\dim_{\mathbb{C}}{(T\cdot x)}=1\}$ is called
1-skeleton of $\mathcal{H}_\lambda.$ The \textbf{moment graph} or
\textbf{GKM graph} of $\mathcal{H}_\lambda$ is the image of its
1-skeleton under the momentum map. This graph has vertices
corresponding to the $T$-fixed points and edges corresponding to
closures of connected components of the 1-skeleton. Two vertices are
connected by an edge in the moment graph if and only if they differ
by one transposition.

For two $T$-fixed points $F, F' \in \mathcal{H}_\lambda$ such that
their images under the momentum map $\mu$ are connected by an edge
in the moment graph, we denote by $S^2_{F, F'}\subset
\mathcal{H}_\lambda$ the corresponding sphere associated to them.

We now want to compute the symplectic area of $S^2_{F, F'}\subset
\mathcal{H}_\lambda$ with respect to $\w_\lambda$ in terms of
$\lambda$. Let us suppose that $F$ and $F'$ differ by the
transposition $(i, j)\in S_n$ and the $i$-th component $F_i\in
\{\lambda_1, \cdots, \lambda_n\}$ of $F$ is greater than its $j$-th
component $F_j\in \{\lambda_1, \cdots, \lambda_n\}.$ If $T'\subset
T$ is the codimension one torus that fixes $S^2_{F, F'},$ there
exists a torus of dimension one $S\subset T$ such that $T\cong
T'\times S.$ We will use the identification $S:=\R/\Z,$ which
induces an isomorphism $\operatorname{Lie}(S)\cong \R$ leading to
$\operatorname{Lie}(S)^*\cong \R,$ mapping the lattice
$\operatorname{Hom}(S, S^1)\subset \operatorname{Lie}(S)^*$
isomorphically to $\Z\subset \R.$

The action of $S$ on $S^2_{F, F'}$ is hamiltonian with momentum map
$$\iota^*\circ \mu|_{S^2_{F, F'}}:S^2_{F, F'}\to
\operatorname{Lie}(S)^*\cong \R,$$ where $\iota:S\hookrightarrow T$
is the inclusion map. The momentum image of $S_{F, F'}^2$ under
$\iota^*\circ \mu|_{S^2_{F, F'}}$ is the segment line that joins
$\iota^*(\mu(F))$ with $\iota^*(\mu(F')).$ Note that the weight of
$T$ on $T_FS^2_{F, F'}$ is equal to $e_i-e_j,$ thus the weight of
the action of $S$ on $T_FS^2_{F, F'}$ is $\iota^*(e_i-e_j).$

Let $\gamma:[0, 1] \to S^2_{F, F'} \hookrightarrow
\mathcal{H}_\lambda$ be any smooth path from $F$ to $F'$ and $c:[0,
1]\times S \to S^2_{F, F'}$ be the map defined by $c(t, s):=s\cdot
\gamma(t).$ Then,
$$
\int_{[0, 1]\times S }c^*(\w_\lambda|_{S^2_{F, F'}})=\int_{0}^1
\gamma^*(\iota_{\xi_{S^2_{F,
F'}}\w_\lambda})=\iota^*(\mu(F))-\iota^*(\mu(F')).
$$
Note that the integral $\int_{[0, 1]\times S }c^*\w_\lambda$ is
equal to the symplectic area of $S^2_{F, F'}$ times the weight
$\iota^*(e_i-e_j).$ Since $F-F'=(F_i-F_{j})(e_{i}-e_{j}),$ and
$\iota^*(\mu(F))-\iota^*(\mu(F'))=(F_{i}-F_{j})\iota^*(e_{i}-e_{j}),$
we conclude that the symplectic area of $S^2_{F, F'}$ is equal to
$F_i-F_j$

As an example, the following figure shows the moment graph of
$\mathcal{H}_{(\lambda_1, \lambda_2,\lambda_3)}$ with three of its
edges labeled with theirs corresponding symplectic areas:

\begin{center}
\includegraphics{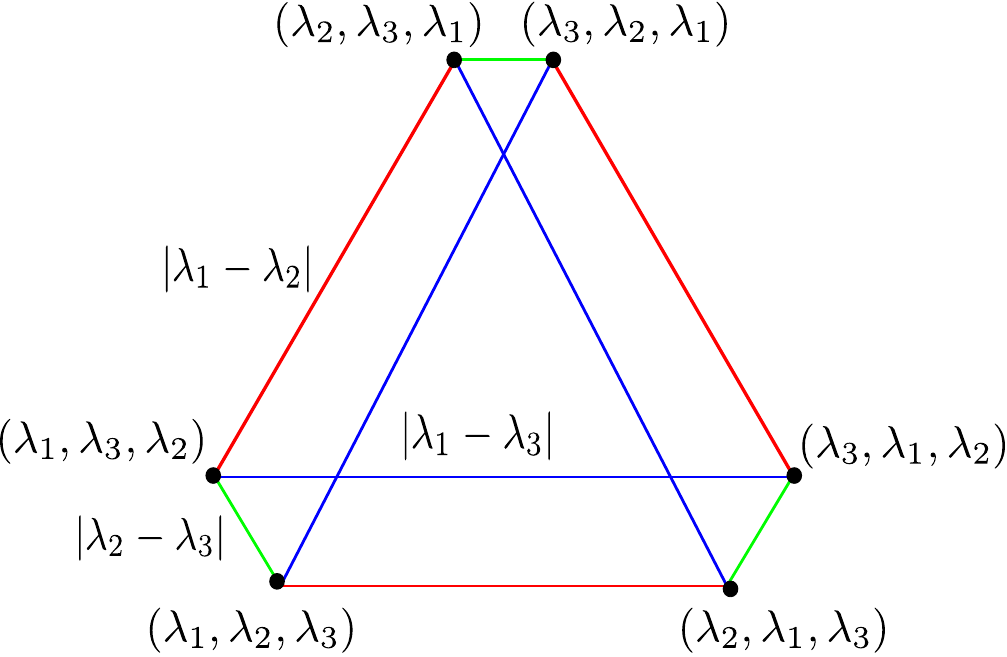}
\end{center}

Let us suppose now that $\lambda=(\lambda_1, \cdots, \lambda_n)\in
\R^n$ is of the form
$$
\lambda_1=\cdots=\lambda_{m_1},
\lambda_{m_1+1}=\cdots=\lambda_{m_1+m_2}, \cdots,
\lambda_{m_1+m_2+\cdots+m_{l-1}+1}=\cdots=\lambda_{n},
$$
where $1\leq m_1, m_2, \cdots, m_{l-1}, m_l \leq n$ are integers
such that $m_1+m_2+\cdots+m_{l-1}+m_l=n,$ and $\{\lambda_{m_1},
\lambda_{m_1+m_2}, \cdots, \lambda_{n}\}$ are all the pairwise
different components of $\lambda.$ Let $a$ be the strictly
increasing sequence of integers $0=a_0<a_1< a_2 < \cdots < a_l=n$
defined by $a_j=\sum_{i=1}^j m_i$ and let $Fl(a; n)$ be the set of
flags of type $a,$ i.e., the set of increasing filtrations of $\C^n$
by complex subspaces
$$
0=V^0\subset V^{1}\subset V^{2} \subset \cdots \subset V^{l} = \C^n
$$
such that $\dim_{\mathbb{C}}{V^i}=a_i.$ Note that there is a
naturally defined action of $Sl(n, \C)$ on $Fl(a; n).$

For a flag $V=(V^{1}, \cdots, V^{l})\in Fl(a; n),$ denote by
$P_j=P_j(V)$ the orthogonal projection onto $V_j.$ We can form the
Hermitian operator
$$
A_\lambda(V)=\sum_j \lambda_{a_j}(P_j-P_{j-1}).
$$
The correspondence $V\mapsto A_\lambda(V)$ defines a diffeomorphism
between $Fl(a; n)$ and $\mathcal{H}_\lambda.$ This diffeomorphism
defines by pullback a $U(n)$-invariant symplectic form on $Fl(a;
n).$ It also defines an integrable almost complex structure on
$Fl(a; n)$ so that $Sl(n, \C)$ acts holomorphically on $Fl(n, \C),$
and the map $A_\lambda:Fl(n, \C) \to \mathcal{H}_\lambda$ is a
$Sl(n, \C)$-invariant biholomorphism.

The (co)homology of $Fl(a, n)$ (and hence the (co)homology of
$\mathcal{H}_\lambda$) can be computed from the CW-structure of
$Fl(a; n)$ coming from its Schubert cell decomposition.

Let $S_n$ be the group of permutations of $n$ elements. Recall that
the length of a permutation is, by definition, equal to the smallest
number of adjacent transpositions whose product is the permutation.
Let $W_a \subset S_n$ be the subgroup generated by the simple
transpositions $s_i=(i, i+1)$ for $i\notin \{a_1, \cdots, a_l\}.$
Let $W^a \subset S_n$ be the set of smallest coset representatives
of $S_n/W_a.$ Let $F\in Fl(a; n)$ be the partial flag defined by
$$
F:=\C^{a_1}\subset \C^{a_2} \subset \cdots \subset \C^{a_n}=\C^n
$$
and $B$ be the standard Borel subgroup of $Sl(n, \C)$ of upper
triangular matrices.

For a permutation $w\in W^a,$ the \textbf{Schubert cell} $C_w$ is
the orbit of the induced action of $B \subset Sl(n, \C)$ on $Fl(a;
n)$ through $w\cdot F.$ The \textbf{Schubert variety} $X_w$ is by
definition the closure of the Schubert cell $C_w.$

For $w\in W^a,$ the Schubert cell $C_w$ is isomorphic to an affine
space of complex dimension equal to the length of $w.$ The Schubert
cells $\{C_w\}_{w\in W^a}$ define a CW-complex for $Fl(a; n)$ with
cells occurring only in even dimension. Thus, the fundamental
classes $[X_w]$ of $X_w, w\in W^a,$ are a free basis of $H_*(Fl(a;
n), \Z)$ as a $\Z$-module. Likewise, the Poincar\'e dual classes of
$[X_w]$, $w\in W^a,$ are a free basis of $H^*(Fl(a; n), \Z)$ as a
$\Z$-module.

The diffeomorphism $A_\lambda:Fl(a; n) \to \mathcal{H}_\lambda$ maps
the Schubert cells $C_w \in Fl(a; n), w\in W^a,$ to the $B$-orbits
of $w\cdot \lambda$ in $\mathcal{H}_\lambda.$ By abusing notation,
we will denote the $B$-orbits of $w\cdot \lambda$ in
$\mathcal{H}_\lambda$ by $C_w$ and their closures by $X_w$ and refer
to them as the Schubert cells and Schubert varieties associated to
$w\in W^a$ in $\mathcal{H}_\lambda,$ respectively.

\begin{rk}
Note that $A_\lambda$ maps the Schubert varieties
$X_{(a_j, a_j+1)} \subset Fl(a; n)$ to the spheres $S^2_{\lambda,
(a_j, a_j+1)\cdot \lambda}\subset \mathcal{H}_\lambda.$ Thus, the
homology group $H_2(\mathcal{H}_\lambda, \Z)$ is freely
ge\-ne\-ra\-ted as a $\Z$-module by the fundamental classes of
$S^2_{\lambda, (a_j, a_j+1)\cdot \lambda}, 1\leq j \leq l.$
\end{rk}

\section{Upper bounds of the Gromov width of Grassmannian ma\-ni\-folds}

Yael Karshon and Susan Tolman in \cite{karshon} found upper bounds
for the Gromov width of Grassmannian manifolds by computing a
Gromov-Witten invariant. In this section, we are going to review
this idea, which would be particularly useful for considering the
most general problem of determining upper bounds for the Gromov
width of partial flag manifolds.

We establish the convention that would be used during this section.
Let $G(k, n)$ be the Grassmannian manifold of $k$-planes in $\C^n.$
Let $\lambda \in \R^n$ be of the form
$$
\lambda_1=\cdots=\lambda_1>\lambda_2=\cdots=\cdots \lambda_2
$$
and $\mathcal{H}_\lambda=\{A\in M_n(\C): A^*=A,
\operatorname{spectrum}{A}=\lambda\}.$ As we have remarked in the
previous section, there is some integer $1\leq k \leq n$ such that
$\mathcal{H}_\lambda$ is diffeomorphic to a Grassmannian manifold
$G(k, n).$

Let $(\w_\lambda, J_\lambda)$ be the K\"ahler structure of
$\mathcal{H}_\lambda\cong G(k, n)$ defined in Section \ref{typeA}.
Let $A$ be the standard generator of the second homology group
$H_2(G(k, n), \Z).$ Let
$$
\mathcal{M}_A(J_\lambda)=\{u:\cpi \to G(k, n): u \text{ is
$J_\lambda$-holomorphic and } u_*[\cpi]=A\}
$$
be the moduli space of $J_\lambda$-holomorphic curves of degree $A$
defined on $G(k, n).$ This moduli space is usually called the space
of projective lines of the Grassmannian manifold $G(k, n).$

For a holomorphic curve $u:\cpi\to G(k, n)$ of degree $A$, we define
the kernel of $u$ as the intersection of all the subspaces $V\subset
\C^n$ that are in the image of $u.$ Similarly, the span of $u$ is
the linear span of these subspaces:
$$
\ker(u)=\bigcap_{V\in u\bigl(\mathbb{CP}^1\bigr)} V, \ \ \ \ \
\operatorname{span}(u)=\sum_{V\in u\bigl(\mathbb{CP}^1\bigr)} V.
$$
The kernel and span of $u$ are of dimension $k-1$ and $k+1$
respectively and they determine uniquely, up to parametrization, the
holomorphic curve $u,$ i.e., if there is a holomorphic curve $v
:\cpi \to G(k, n)$ of degree $A$ such that $\ker(u)=\ker(v)$ and
$\operatorname{span}(u)=\operatorname{span}(v),$ then there exists
$g:\cpi\to \cpi \in PSL(2; \C)$ such that $v=g\circ u.$ Moreover,
$u(\cpi)=\{V^k\in G(k, n): \ker(u)\subset V^k \subset
\operatorname{span}(u)\} \subset G(k, n)$ \cite{buch}. So
$\mathcal{M}_A(J_\lambda)/PSL(2, \C) \simeq Fl(k-1, k+1; n),$ where
$Fl(k-1, k+1; n)$ denotes the partial flag manifold of complex
subspaces sequences
$$
V^{k-1}\subset V^{k+1} \subset \C^n.
$$
For $V=(V^{k-1}, V^{k+1})\in Fl(k-1, k+1; n)$, we will denote by
$u_V$ the projective line
$$
\cpi \simeq u_V=\{V^k\in G(k, n): V^{k-1}\subset V^k \subset
V^{k+1}\} \subset G(k, n).
$$

Notice that $\mathcal{M}_A(J_\lambda)/PSL(2, \C)$ is compact due to
the indecomposability of $A.$ Let us consider the evaluation map
$$\operatorname{ev}^2_{J_\lambda}:\mathcal{M}_A(J_\lambda)\times_{PSL(2, \mathbb{C})} (\cpi)^2 \to G(k, n)^2.$$

We want to find a compact complex submanifold $X \subset G(k, n)$
such that for a generic point $p$ in $G(k, n)$ the evaluation map
$\operatorname{ev}^2_{J_\lambda}$ would be transverse to
$(\{p\}\times X)\subset G(k, n)^2,$
$\dim_{\mathbb{C}}(\mathcal{M}_A(J_\lambda)\times_{PSL(2,
\mathbb{C})} (\cpi)^2) + \dim_{\mathbb{C}}{X}$ would be equal to
$2\dim_{\mathbb{C}}G(k, n)$, and the number of holomorphic curves in
$\mathcal{M}_A(J_\lambda)/PSL(2, \C)$ that pass through $p$ and $X$
would be different to zero. If so, the Gromov-Witten invariant
$\operatorname{GW}_{A,2}^{J_\lambda}(\operatorname{PD}[p],
\operatorname{PD}[X])$ would be different from zero and by Theorem
\ref{nsq} and Remark \ref{rk1}, we will have that
$$
\operatorname{Gwidth}(\mathcal{H}_\lambda, \w_\lambda) \leq
\w_\lambda(A)=|\lambda_1-\lambda_2|.
$$

We claim that the Grassmannian manifold $X=\{V^k\in G(k, n):
\C\subset V^k \subset \C^{n-1}\} \subset G(k, n)$ satisfies all
these conditions.

Proving that the evaluation map $\operatorname{ev}^2_{J_\lambda}$ is
transverse to $(\{p\}\times X)\subset G(k, n)^2$ can be obtained as
a consequence of the Bertini-Kleiman Transversality Theorem:

\begin{teor}{\bf
Bertini, Kleiman \cite{mcduff}} Let $f:U \to V$ be a smooth map
between smooth manifolds and let $G$ be a Lie group that acts
transitively on $V.$ Let $Z$ be an arbitrary submanifold of $V$ and
$G^{reg}$ be the set of elements $g\in G$ for which $f$ is
transverse to $gZ.$ Then, $G^{reg}$ is a set of the second category
in $G.$
\end{teor}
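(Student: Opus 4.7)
The plan is to bundle the $G$-action together with $f$ into a single auxiliary map that is \emph{a priori} transverse to $Z$, and then extract the conclusion from Sard's theorem applied to the obvious projection. Define the universal map
$$
\Phi: G\times U \to V, \qquad \Phi(g,u)=g^{-1}\cdot f(u).
$$
Because $G$ acts transitively on $V$, for every $v\in V$ the orbit map $g\mapsto g^{-1}\cdot v$ is a submersion $G\to V$. Hence the partial differential of $\Phi$ in the $G$-direction alone is already surjective onto $TV$, so $\Phi$ is itself a submersion, and in particular $\Phi\pitchfork Z$.

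By the preimage theorem, $W:=\Phi^{-1}(Z)$ is a smooth submanifold of $G\times U$, whose points are exactly the pairs $(g,u)$ with $f(u)\in gZ$. Let $\pi:W\to G$ denote the restriction of the projection onto the first factor. Next I would invoke Sard's theorem for $\pi$: since $W$ is a finite-dimensional second-countable manifold, hence $\sigma$-compact, the set of critical values of $\pi$ is an $F_{\sigma}$ subset of $G$ of Lebesgue measure zero, and so the set $G^{\mathrm{reg}}$ of its regular values is residual in $G$, i.e.\ of the second category.

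The remaining step is to identify regular values of $\pi$ with those $g\in G$ for which $f\pitchfork gZ$. Unwinding the definitions, $d\pi_{(g,u)}$ is surjective precisely when, for every $\dot g\in T_g G$, there exists $\dot u\in T_u U$ with $d\Phi_{(g,u)}(\dot g,\dot u)\in T_{\Phi(g,u)}Z$. Because $\Phi$ is submersive in the $G$-variable alone, the $\dot g$-contribution already spans $T_{\Phi(g,u)}V$, and the condition collapses to transversality of the partial map $\Phi(g,\cdot)=L_{g^{-1}}\circ f$ to $Z$ at $u$; translating by the diffeomorphism $L_g$ turns this into $f\pitchfork gZ$ at $u$. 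The only delicate point I anticipate is precisely this last linear-algebraic equivalence, but it is a direct chase of definitions once $\Phi$ has been introduced.
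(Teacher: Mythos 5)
The paper states this theorem without proof, simply citing McDuff--Salamon, and your argument is precisely the standard parametric-transversality proof of that result; it is correct. The two points that require care --- that transitivity forces the orbit maps, and hence $\Phi(g,u)=g^{-1}\cdot f(u)$, to be submersions, and that Sard's measure-zero conclusion upgrades to meagerness of the critical values via the $\sigma$-compactness of the closed critical locus of $\pi|_W$ --- are both handled adequately, and the final linear-algebra step identifying regular values of $\pi$ with the $g$ for which $f\pitchfork gZ$ is the usual definition chase you describe.
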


We now prove that indeed the Gromov-Witten invariant
$\operatorname{GW}_{A,2}^{J_\lambda}(\operatorname{PD}[p],\operatorname{PD}[X])$
is different from zero.

\begin{lemma}\label{gw}
Let $X=\{V^k\in G(k, n): \C\subset V^k \subset \C^{n-1}\} \simeq
G(k-1, n-2)$ and $p\in G(k, n).$ Then
$$
\operatorname{GW}_{A,2}^{J_\lambda}(\operatorname{PD}[p],\operatorname{PD}[X])=1.
$$
\end{lemma}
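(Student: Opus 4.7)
My plan is to compute $\operatorname{GW}^{J_\lambda}_{A,2}(\operatorname{PD}[p], \operatorname{PD}[X])$ by direct enumeration, exploiting the identification $\mathcal{M}_A(J_\lambda)/PSL(2,\mathbb{C})\simeq Fl(k-1, k+1; n)$ recalled above. The structure $J_\lambda$ is regular because $Sl(n,\mathbb{C})$ acts transitively on $G(k,n)$ by holomorphic diffeomorphisms, and since $A$ generates $H_2(G(k,n),\mathbb{Z})$ it is $\w_\lambda$-indecomposable; in particular the relevant moduli spaces are compact and the invariant is well defined.

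The first step is to apply Bertini-Kleiman transversality. Since $Sl(n,\mathbb{C})$ acts transitively and holomorphically on $G(k,n)$, the product group acts transitively on $G(k,n)\times G(k,n)$, and for a generic pair of translates of the cycle $\{p\}\times X$ the evaluation map $\operatorname{ev}^2_{J_\lambda}$ becomes transverse to it. Because the transverse intersection happens between complex submanifolds of complementary complex dimension and all maps are holomorphic, each preimage point contributes $+1$, and the Gromov-Witten invariant equals the cardinality of this preimage.

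The second step is to identify the preimage geometrically. A class $[u, z_1, z_2]$ with $u(z_1)=p$ and $u(z_2)\in X$ corresponds bijectively to a pair $(\ell, W)$ with $\ell$ a projective line of $G(k,n)$ passing through $p$ and $W\in\ell\cap X$: using $PSL(2,\mathbb{C})$ I may send the marked points to $0,\infty$, after which the parametrization of $\ell$ is determined up to the residual $\mathbb{C}^*$ by the conditions $u(0)=p$ and $u(\infty)=W$. For generic $p$ and $X$ one has $p\notin X$ and so $W\ne p$; then $\ell=\ell_V$ with $V^{k-1}=p\cap W$ and $V^{k+1}=p+W$ is forced, and the count reduces to the enumeration of $k$-planes $W\in X$ satisfying $\dim(W\cap p)=k-1$.

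Finally, write $X=\{W : L\subset W\subset H\}$ with $L$ a line and $H$ a hyperplane of $\mathbb{C}^n$, and choose $p, L, H$ in general position so that $L\subset H$, $L\not\subset p$, $p\not\subset H$; then $p\cap H$ has dimension $k-1$ and $(p\cap H)\cap L=0$. For such a $W\in X$ we have $W\cap p\subseteq p\cap H$ (since $W\subset H$) and, having the same dimension $k-1$, the two subspaces must coincide; combined with $L\subset W$ this forces $W\supseteq L+(p\cap H)$, a $k$-dimensional subspace, hence $W=L+(p\cap H)$. This unique $W$ manifestly lies in $X$ and meets $p$ in a hyperplane, so the invariant equals $1$. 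The main point to be careful about is the passage from the set-theoretic count to the signed intersection number, which relies on the holomorphicity of $\operatorname{ev}^2_{J_\lambda}$ and the complex structures on the cycles $\{p\}$ and $X$.
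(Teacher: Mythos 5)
Your proposal is correct and follows essentially the same route as the paper: Bertini--Kleiman transversality plus positivity of intersections to reduce the invariant to a set-theoretic count, and then the same linear-algebra enumeration showing that the unique line through a generic $p$ meeting $X$ is the one with $V^{k-1}=p\cap \C^{n-1}$ and $V^{k+1}=p+\bigl((p\cap\C^{n-1})\oplus\C\bigr)$, which is exactly your $W=L+(p\cap H)$. The only (harmless) omission is that you do not explicitly verify the dimensional constraint $\dim_{\mathbb{C}}\mathcal{M}_{A,2}+\dim_{\mathbb{C}}X=2\dim_{\mathbb{C}}G(k,n)$, which the paper checks directly, though your transverse zero-dimensional nonempty intersection implies it.
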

\begin{proof}
Since the complex dimension of $X$ is equal to $(n-k-1)(k-1),$ $X$
satisfies the dimensional constraint
\begin{align*}
\dim_{\mathbb{C}}(\mathcal{M}_A(J_\lambda)\times_{PSL(2,
\mathbb{C})} (\cpi)^2) +
\dim_{\mathbb{C}}{X}&=\dim_{\mathbb{C}}Fl(k-1, k+1;
n)+2+\dim_{\mathbb{C}}X \\&= 2\dim_{\mathbb{C}}G(k, n).
\end{align*}

Assume now that $p=W^k$ is a $k$-dimensional subspace of $\C^n$ that
does not contain $\C$ and transversally intersects $\C^{n-1}.$ We
claim that $(\operatorname{ev}^2_{J_\lambda})^{-1}(\{p\}\times X)$
consists of just one element, i.e.,  there is a unique line in $G(k,
n)$ that intersects $X$ and passes through $W^k:$ let $V=(V^{k-1},
V^{k+1})\in Fl(k-1, k+1; n)$ such that the projective line $u_V$
passes through both $X$ and $p.$ So there exists $V^k \in X$ (that
is, $\C\subset V^k \subset \C^{n-1}$) and $V^{k-1} \subset V^k
\subset V^{k+1}.$ Moreover we have $V^{k-1} \subset W^k \subset
V^{k+1} $ ($W^k$ is $p$).

Note that, we have inclusions $V^{k-1}\subset \C^{n-1}$ and $V^{k-1}
\subset W^k.$ Thus $V^{k-1}\subset W^k\cap \C^{n-1}.$ But $W^k\cap
\C^{n-1}$ is a $(k-1)$-dimensional vector subspace because the
intersection is transverse. Thus $V^{k-1}=W^k\cap \C^{n-1}.$ The
intersection $V^{k-1}=W^k\cap \C^{n-1}$ does not contain $\C.$ So
there exists a unique $k$-dimensional vector space $U^k$ such that
$V^{k-1}\subset U^k$ and $\C\subset U^k \subset \C^{n-1}.$ This
vector space is $U^k=V^{k-1}\oplus \C$. Thus, $V^k=V^{k-1}\oplus
\C.$ The vector space $V^{k+1}$ contains  $W^k$ and
$V^k=V^{k-1}\oplus \C.$ Observe that $V^k$ is different from $W^k$
because $V^k$ contains $\C$ and $W^k$ does not. Therefore
$V^{k+1}=W^k+V^k.$

In conclusion $(V^{k-1}, V^{k+1})=(W^k\cap \C^{n-1}, W^k+((W^k\cap
\C^{n-1})\oplus \C)),$ which determines a unique projective line
that intersects $X$ and passes through $W^k.$

\begin{center}
\includegraphics{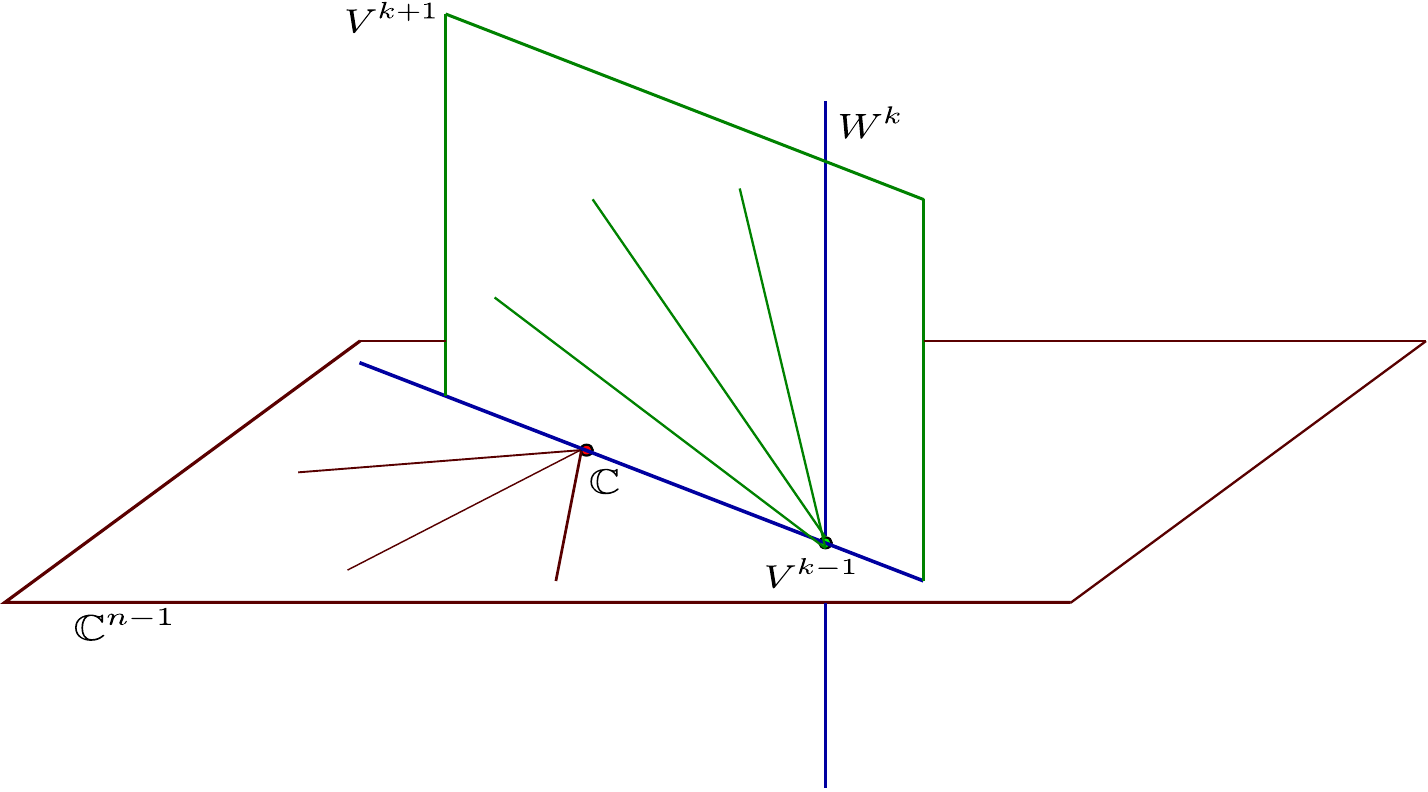}
\end{center}

Note that if $p=W^k$ is a $k$-dimensional subspace of $\C^n$ that
either contains $\C$ or is contained in $\C^{n-1},$  then
$(\operatorname{ev}^2_{J_\lambda})^{-1}(\{p\}\times X)$ consists of
an infinite number of elements.

We now prove that the evaluation map
$$
\operatorname{ev}^2_{J_\lambda}:\mathcal{M}_{A, 2}(J_\lambda) \to
G(k, n)^2
$$
is transverse to $(\{p\}\times X)\subset G(k, n)^2.$ The group
$Sl(n, \C)$ acts transitively and holomorphically on $G(k, n)$ so as
a consequence there exists $h\in Sl(n, \C)$ such that
$\operatorname{ev}^2_{J_\lambda}\pitchfork (\{h\cdot p\}\times
X)\subset G(k, n)^2$ and thus the preimage
$(\operatorname{ev}^2_{J_\lambda})^{-1}(\{h\cdot p\}\times X)$
consists of just one point (the number of elements of the preimage
$(\operatorname{ev}^2_{J_\lambda})^{-1}(\{h\cdot p\}\times X)$ is
either one or infinite, but if the evaluation map is transverse to
$\{h\cdot p\}\times X$ it has to be necessarily one), by Proposition
7.4.5 of \cite{mcduff} the Gromov-Witten invariant
$\operatorname{GW}^{J_\lambda}_{A, 2}(\operatorname{PD}[p],
\operatorname{PD}[X])$ is positive, so in conclusion
$$
\operatorname{GW}^{J_\lambda}_{A, 2}(\operatorname{PD}[p],
\operatorname{PD}[X])=\operatorname{GW}^{J_\lambda}_{A,2}(\operatorname{PD}[h\cdot
p], \operatorname{PD}[X])=1
$$
\end{proof}

We have proved that for Grassmannian manifolds there is a
non-vanishing Gromov-Witten invariant with one of its constrains
being a point. This would imply that the Gromov width of a
Grassmannian manifolds is bounded from above by the symplectic area
of any line of the Grassmannian manifold. In summary, we have the
following result:

\begin{teor}[\bf Karshon-Tolman, Guangcun Lu]\label{kt}

Let $$\mathcal{H}_\lambda=\{A\in M_n(\C):A^*=A,
\operatorname{spectrum}{A}=\lambda\}$$ where $\lambda \in \R^n$ is
of the form
$$
\lambda_1=\cdots=\lambda_1>\lambda_2=\cdots=\cdots \lambda_2,
$$
and let $\w_\lambda$ be the Kirillov-Kostant-Souriau form defined on
$\mathcal{H}_\lambda.$ Then,
$$
\operatorname{Gwidth}(\mathcal{H}_\lambda, \w_\lambda) \leq
|\lambda_1-\lambda_2|.
$$
\end{teor}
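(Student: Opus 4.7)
The proof proposal is to assemble the machinery already established in the excerpt: by the hypothesis on $\lambda$, the orbit $\mathcal{H}_\lambda$ is diffeomorphic (even biholomorphic) to a Grassmannian $G(k,n)$ for some $k$, and the Kähler form $\omega_\lambda$ makes this into the Kähler Grassmannian to which all of the preceding section applies. So the plan is to feed Lemma \ref{gw} into Remark \ref{rk1} and then into Theorem \ref{nsq}, after identifying the symplectic area of the generator $A \in H_2(G(k,n), \Z)$.

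First I would compute $\omega_\lambda(A)$. Since $A$ is the standard positive generator of $H_2(G(k,n), \Z) \cong \Z$, and since under the identification of $\mathcal{H}_\lambda$ with the flag variety the generator is represented by any Schubert line $S^2_{\lambda, (i,j)\cdot \lambda}$ where $(i,j)$ is an adjacent transposition across the jump from eigenvalue $\lambda_1$ to $\lambda_2$ in the GKM graph, the symplectic area computation carried out in Section \ref{typeA} gives $\omega_\lambda(A) = \lambda_1 - \lambda_2 = |\lambda_1 - \lambda_2|$ (assuming, as we may, that $\lambda_1 > \lambda_2$).

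Next, Lemma \ref{gw} produces a compact complex submanifold $X = \{V^k : \C \subset V^k \subset \C^{n-1}\} \subset G(k,n)$ and shows that for a point $p \in G(k,n)$,
$$
\operatorname{GW}^{J_\lambda}_{A,2}\bigl(\operatorname{PD}[p], \operatorname{PD}[X]\bigr) = 1.
$$
Because one of the two cohomological constraints, $\operatorname{PD}[p]$, is Poincaré dual to the fundamental class of a point, Remark \ref{rk1} applies directly: for a generic $\omega_\lambda$-compatible almost complex structure $J'$, the evaluation map $\operatorname{ev}^1_{J'} : \mathcal{M}^*_{A,1}(G(k,n), J') \to G(k,n)$ is surjective. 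Note that $A$ is primitive in $H_2$ and positive, hence $\omega_\lambda$-indecomposable, so the Gromov-Witten invariant is well defined and the compactness hypotheses needed for Theorem \ref{nsq} are in force.

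Finally, Theorem \ref{nsq} applied to this class $A$ yields that for any symplectic embedding $B_{2n}(r) \hookrightarrow (\mathcal{H}_\lambda, \omega_\lambda)$ one has $\pi r^2 \leq \omega_\lambda(A) = |\lambda_1 - \lambda_2|$, hence $\operatorname{Gwidth}(\mathcal{H}_\lambda, \omega_\lambda) \leq |\lambda_1 - \lambda_2|$, which is the claim. Since every analytical ingredient (the enumerative count, the GKM area computation, and the non-squeezing argument) has already been developed, this final theorem really is an assembly lemma and there is no genuine obstacle beyond bookkeeping; the only point that warrants a line of justification is the identification $\omega_\lambda(A) = |\lambda_1 - \lambda_2|$, which I would dispatch by pointing to the area computation for the Schubert line in the moment graph earlier in the paper.
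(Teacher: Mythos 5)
Your proposal is correct and follows exactly the paper's own route: the paper's proof is the same assembly of Lemma \ref{gw}, Remark \ref{rk1}, and Theorem \ref{nsq}, together with the area identity $\w_\lambda(A)=|\lambda_1-\lambda_2|$. Your extra paragraph justifying that area identity via the moment-graph computation is a welcome expansion of a step the paper merely asserts.
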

\begin{proof}
The result follows from Remark \ref{rk1} and Lemma \ref{gw}, and the
fact that the symplectic area of $A$ with respect to $\w_\lambda$ is
equal to $|\lambda_1-\lambda_2|.$
\end{proof}

\section{Upper bounds of the Gromov width of coadjoint orbits of
type A}

The problem of finding upper bounds of the Gromov width of coadjoint
orbits of type A has already been addressed by Masrour Zoghi in his
Ph.D thesis \cite{masrour} where he has considered the problem of
determining the Gromov width of regular coadjoint orbits of compact
Lie groups. We start this section by first describing Zoghi's
results, and then we show how to extend his results to coadjoint
orbits that may no be regular.

\begin{teor}\label{fibi}

Let $(M, \w)$ be a symplectic manifold and $J$ be a regular
$\w$-compatible almost complex structure on $M,$ and suppose that
$M$ admits a $J$-holomorphic $\cpi$-fibration $\pi: M \to Y$ where
$Y$ is a connected, compact K\"ahler manifold, and let $d\in H_2(M,
\Z)$ denote the homology class of the fibers of $\pi.$ Then, the
evaluation map
$$
\operatorname{ev}_J^1:\mathcal{M}_{d,1}^*(M, J)\to M
$$
is a diffeomorphism.
\end{teor}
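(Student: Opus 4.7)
The plan is to show that every simple $J$-holomorphic sphere in class $d$ is a biholomorphic parametrization of a fiber of $\pi$, from which bijectivity of $\operatorname{ev}_J^1$ is immediate, and then verify it is a local diffeomorphism by combining a dimension count with the observation that variations in the marked point together with variations through nearby fibers span $T_{u(z)}M$.

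First I would show that any $u \in \mathcal{M}_d^*(M, J)$ factors through a single fiber. Since $\pi$ is $J$-holomorphic, $\pi \circ u : \cpi \to Y$ is holomorphic, and the class $\pi_*(d) \in H_2(Y, \Z)$ vanishes because $d$ is the class of a fiber. A non-constant holomorphic map from $\cpi$ into the compact K\"ahler manifold $Y$ would push the fundamental class to a positive multiple of the homology class of its image, which pairs strictly positively with the K\"ahler form, contradicting $\pi_*(d) = 0$. Hence $\pi \circ u$ is constant, so $u(\cpi)$ lies in a single fiber $F := \pi^{-1}(y) \cong \cpi$. Because $u_*[\cpi] = d = [F]$, the map $u : \cpi \to F$ is a degree-one holomorphic self-map of a Riemann sphere, hence a biholomorphism.

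Bijectivity of $\operatorname{ev}_J^1$ is then transparent. Given $x \in M$, let $y = \pi(x)$; any preimage $[u, z]$ of $x$ satisfies $u(\cpi) = \pi^{-1}(y)$ and $u(z) = x$, and any two such pairs $(u, z), (u', z')$ differ by the unique element of $PSL(2, \C)$ intertwining the two biholomorphic parametrizations, hence represent the same equivalence class. Conversely, fixing any biholomorphism $u_0 : \cpi \to \pi^{-1}(y)$ and setting $z_0 := u_0^{-1}(x)$ exhibits a preimage.

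To finish, since the normal bundle of a fiber $F$ in $M$ is the pullback $\pi^*(T_y Y)|_F$ and therefore trivial, one has $c_1(TM) \cdot [F] = c_1(TF) \cdot [F] = 2$, so $c_1(d) = 2$ and $\dim \mathcal{M}_{d, 1}^*(M, J) = \dim M + 2 c_1(d) + 2 - 6 = \dim M$. It therefore suffices to check that at each $[u, z]$ the derivative of $\operatorname{ev}_J^1$ is surjective onto $T_{u(z)} M$. Write $T_{u(z)} M = T_{u(z)} F \oplus H$ with $d\pi$ restricting to an isomorphism $H \to T_y Y$: varying only the marked point produces $du(\zeta)$ for $\zeta \in T_z \cpi$, which fills out the vertical summand because $u$ is a biholomorphism onto $F$, while varying $u$ through biholomorphic parametrizations of nearby fibers yields, upon evaluation at $z$, deformations realizing every horizontal direction. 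Together these span $T_{u(z)} M$, so the derivative is surjective and hence an isomorphism by dimensions. The main subtle step is confirming that the horizontal variations through neighboring fibers really do give smooth tangent vectors to $\mathcal{M}_d^*(M, J)$ lying in the kernel of the linearized Cauchy--Riemann operator, but this is automatic because the nearby fibers are themselves $J$-holomorphic spheres in class $d$, producing an honest smooth path in the moduli space.
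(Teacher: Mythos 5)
Your proof is correct and its first half is identical to the paper's: both establish that $\pi\circ u$ is constant because $\pi_*(d)=0$ and non-constant holomorphic spheres in a compact K\"ahler manifold have positive symplectic area, and hence that every simple curve in class $d$ is a degree-one, hence biholomorphic, parametrization of a fiber. Where you diverge is in the second half. The paper factors $\operatorname{ev}_J^1$ through the commutative square built from the forgetful map $f:\mathcal{M}_{d,1}^*\to\mathcal{M}_{d,0}^*$ and the map $\rho:\mathcal{M}_{d,0}^*\to Y$, asserting (explicitly without proof) that $\rho$ is a diffeomorphism and that $f$-fibers map diffeomorphically to $\pi$-fibers. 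You instead argue bijectivity directly from the $PSL(2,\C)$-torsor structure and then check that $d(\operatorname{ev}_J^1)$ is onto, using the computation $c_1(d)=2$ (trivial normal bundle of a fiber) to get $\dim\mathcal{M}_{d,1}^*=\dim M$ so that surjectivity forces an isomorphism. Your route has the advantage of making the dimension count explicit, which the paper never does; but be aware that your ``main subtle step'' --- that nearby fibers yield an honest smooth path of \emph{parametrized} curves realizing every horizontal direction --- is precisely the content of the paper's unproved claim that $\rho$ is a diffeomorphism (equivalently, that $d\rho$ is surjective), and calling it ``automatic'' is too quick: one must produce biholomorphic parametrizations $u_t:\cpi\to F_{y(t)}$ varying smoothly in $t$, which requires either local holomorphic triviality of $\pi$ or an argument via regularity of $J$ (e.g., the horizontal projection $\ker D_u\to T_yY$, $\xi\mapsto d\pi(\xi)$, has $6$-dimensional kernel consisting of the holomorphic vector fields on $u^*TF\cong T\cpi$, so by the index formula its image is all of $T_yY$). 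With that point filled in, your argument is a complete and somewhat more self-contained proof than the paper's sketch.
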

\begin{proof}[Sketch:]
Let $u:\cpi \to M$ be a holomorphic curve of degree $d.$ Note that
$\pi_*u_*[\cpi]=\pi_*(d)=0.$ This implies that the map $\pi\circ u$
is constant because $Y$ is a connected, compact K\"ahler manifold.
As a consequence, the image of $u$ lies totally in a fiber of
$\pi:M\to Y,$ let's say $F\cong \cpi.$ The map $u: \cpi \to F \cong
\cpi$ is holomorphic of degree one; and thus $u:\cpi \to F$ is a
biholomorphism. In conclusion,  the $J$-holomorphic curves of $M$ of
degree $d,$ up to parametrization, are embedded curves in $M$ and
correspond to the fibers of $\pi:M \to Y.$

We claim (but we will no prove it) that the map $\rho:
\mathcal{M}_{d,0}^*(M, J) \to Y$ that sends one $J$-holomophirc map
$u:\cpi \to M$ to the point $\pi\circ u(\cpi)\in Y$ is indeed a
diffeomorphism.

Now, if $f: \mathcal{M}_{d, 1}^*(M, J) \to \mathcal{M}_{d, 0}^*(M,
J)$ denotes the forgetful map, the fo\-llo\-wing diagram
\[
\begin{diagram}
\node{\mathcal{M}_{d, 1}^*(M, J)} \arrow{e,t}{\operatorname{ev}_J^1}
\arrow{s,l}{f}
\node{M}\arrow{s,r}{\pi}\\
\node{\mathcal{M}_{d, 0}^*(M, J)} \arrow{e,t}{\rho} \node{Y}
\end{diagram}
\]
is commutative. It is not difficult to see that the fibers of $f$
are mapped diffeomorphically  onto the fibers of $\pi.$ This
together with the fact that $\rho: \mathcal{M}_{d, 0}^*(M, J) \to M$
is a diffeomorphism implies that $\operatorname{ev}_J^1$ is a
diffeomorphism.
\end{proof}

Let $\lambda=(\lambda_1, \cdots, \lambda_n) \in \R^n,$
$\mathcal{H}_{\lambda}=\{A\in M_n(\C): A^*=A,
\operatorname{spectrum}{A}=\lambda\}$ and $(\w_\lambda, J_\lambda)$
be the K\"ahler structure of $\mathcal{H}_\lambda$ defined in
Section \ref{typeA}.

The following theorem appears in Zoghi's Ph.D thesis \cite{masrour}
as one of its main results:
\begin{teor}[\bf Zoghi]

Let $\lambda\in \R^n$ be of the form $\lambda_1>\cdots >\lambda_n.$
Suppose that there is an integer $k$ such that any difference of
eigenvalues $\lambda_{i}-\lambda_{j}$ is an integer multiple of
$\lambda_{k+1}-\lambda_{k},$ then
$$
\operatorname{Gwidth}(\mathcal{H}_\lambda, \w_\lambda) \leq
|\lambda_k-\lambda_{k+1}|.$$
\end{teor}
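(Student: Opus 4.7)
The plan is to produce a $\w_\lambda$-indecomposable homology class $A \in H_2(\mathcal{H}_\lambda, \Z)$ with $\w_\lambda(A) = |\lambda_k - \lambda_{k+1}|$ whose one-point evaluation map is onto for a dense set of $\w_\lambda$-compatible almost complex structures, and then conclude with Theorem \ref{nsq} and Remark \ref{rk1}.

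First I would take $A := [S^2_{\lambda, (k, k+1)\cdot \lambda}]$, the class of the $1$-skeleton sphere corresponding to the simple transposition $(k, k+1)$. By the area computation in Section \ref{typeA}, $\w_\lambda(A) = \lambda_k - \lambda_{k+1}$. The divisibility hypothesis is precisely what forces $A$ to be indecomposable: since $\lambda$ is regular, $H_2(\mathcal{H}_\lambda, \Z)$ is freely generated by the classes of the $1$-skeleton spheres $S^2_{\lambda, (j, j+1)\cdot \lambda}$, whose areas $\lambda_j - \lambda_{j+1}$ are by assumption integer multiples of $\lambda_k - \lambda_{k+1}$. Hence every spherical class $B$ satisfies $\w_\lambda(B) \in (\lambda_k - \lambda_{k+1})\Z$, so any positive-area spherical class has area at least $|\lambda_k - \lambda_{k+1}|$, ruling out any nontrivial decomposition of $A$.

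Next I would realize $A$ as the fiber class of a $J_\lambda$-holomorphic $\cpi$-fibration so that Theorem \ref{fibi} applies. Since $\lambda$ is regular, $\mathcal{H}_\lambda$ is identified with the complete flag manifold $Fl(n)$, and the forgetful map $\pi_k : Fl(n) \to Y$ that drops the $k$-dimensional subspace from each complete flag lands in the partial flag manifold $Y$ of type $(1, 2, \ldots, k-1, k+1, \ldots, n-1)$. This map is a holomorphic $\cpi$-fibration whose fiber over $(V^1 \subset \cdots \subset V^{k-1} \subset V^{k+1} \subset \cdots)$ is $\{V^k : V^{k-1} \subset V^k \subset V^{k+1}\}$, projectively a line in $G(k, n)$ representing the class $A$. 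The almost complex structure $J_\lambda$ is regular (by the example on compact Kähler homogeneous spaces under $Sl(n, \C)$), so Theorem \ref{fibi} yields that $\operatorname{ev}^1_{J_\lambda} : \mathcal{M}^*_{A, 1}(\mathcal{H}_\lambda, J_\lambda) \to \mathcal{H}_\lambda$ is a diffeomorphism.

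Finally I would upgrade this ontoness to a generic regular $J$ via a cobordism argument. For any regular $\w_\lambda$-compatible $J$, choose a smooth homotopy $\{J_t\}$ from $J_\lambda$ to $J$ for which $\mathcal{M}^*_{A, 1}(\mathcal{H}_\lambda, \{J_t\})$ is a smooth manifold with the expected boundary $\mathcal{M}^*_{A,1}(\mathcal{H}_\lambda, J) \sqcup \mathcal{M}^*_{A,1}(\mathcal{H}_\lambda, J_\lambda)$. The indecomposability of $A$ together with Gromov compactness forbids bubbling along this family, so the parameterized moduli space is compact and the parameterized evaluation map gives a proper bordism between $\operatorname{ev}^1_{J_\lambda}$ and $\operatorname{ev}^1_J$ into the closed manifold $\mathcal{H}_\lambda$. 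Since $\operatorname{ev}^1_{J_\lambda}$ is a diffeomorphism, it has $\Z/2$-degree $1$; degree is a bordism invariant, so $\operatorname{ev}^1_J$ has $\Z/2$-degree $1$ as well and is in particular onto. Applying Theorem \ref{nsq} with this $A$ then gives the desired bound. The main obstacle is the indecomposability step, where the integer-multiple hypothesis is essential: without it, $A$ could split off smaller positive-area spherical classes and bubbling during the homotopy would destroy the degree argument, while the fibration picture plus Theorem \ref{fibi} does the remaining work almost mechanically.
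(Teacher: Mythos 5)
Your proposal is correct and follows essentially the same route as the paper: the same indecomposable fiber class $[X_{(k,k+1)}] = [S^2_{\lambda, (k,k+1)\cdot\lambda}]$, the same forgetful $\cpi$-fibration $\pi_k$ combined with Theorem \ref{fibi}, and the same cobordism/degree argument to transfer surjectivity of the evaluation map to generic regular $J$ before invoking Theorem \ref{nsq}. The only differences are cosmetic (you work with $\Z/2$-degree and make the regularity of $J_\lambda$ explicit via the homogeneous K\"ahler example).
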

\begin{proof}
The flag variety $Fl(n)$ of sequences of complex vector spaces
$$
V^1\subset V^2 \subset \cdots \subset V^n=\C^n
$$
is isomorphic to $\mathcal{H}_\lambda.$ The second homology group
$H_2(\mathcal{H}_{\lambda}, \Z)$ is freely generated by fundamental
classes of Schubert varieties $X_{(j, j+1)}$ parameterized by the
transpositions $(j, j+1)\in S_n.$ By assumption, there exists $1\leq
k < n$ such that the symplectic areas $\w_{\lambda}(X_{(i,
i+1)})=|\lambda_{i}-\lambda_{i+1}|$ are integer multiples of the
symplectic area $\w_\lambda(X_{(k, k+1)})=|\lambda_k-\lambda_{k+1}|$
for $1\leq i < n.$ This implies that $[X_{(k, k+1)}]$ is a
$\w_{\lambda}$-indecomposable homology class.

We have a naturally defined holomorphic fibration
$$\pi_k: Fl(n) \to Fl(1, \cdots, \widehat{k}, \cdots, n-1; n)$$
with fiber isomorphic to $\cpi.$ Note that the fundamental class
$[X_{(k, k+1)}]$ is the homology class of the fiber of $\pi_k.$

By Theorem \ref{fibi}, the evaluation map
$$
ev_{J_\lambda}^1:\mathcal{M}_{[X_{(k,
k+1)}],1}^*(\mathcal{H}_{\lambda}, J_\lambda)\to \mathcal{H}_\lambda
$$
is a diffeomorphism; in particular, it has degree one. Since
$[X_{(k, k+1)}]$ is a $\w_\lambda$-indecomposable homology class,
for regular $\w_\lambda$-compatible almost complex structures $J',$
the moduli spaces of $J'$-holomorphic maps $\mathcal{M}_{[X_{(k,
k+1)}],1}^*(\mathcal{H}_{\lambda}, J')$ are compact and the
evaluation maps $\operatorname{ev}_{J'}^1$ are compactly cobordant
among each other. In particular, for regular $\w_\lambda$-compatible
almost complex structures $J'$, the evaluation maps
$$
\operatorname{ev}_{J'}^1:\mathcal{M}_{[X_{(k,
k+1)}],1}^*(\mathcal{H}_{\lambda}, J')\to \mathcal{H}_\lambda
$$
have degree one and hence they are onto, which by Theorem \ref{nsq}
implies that
$$
\operatorname{Gwidth}(\mathcal{H}_\lambda, \w_\lambda) \leq
\w_\lambda[X_{(k, k+1)}]=|\lambda_k-\lambda_{k+1}|.
$$

\end{proof}

We now prove the main result of this paper, which extends Zoghi's
result to coadjoint orbits that are not necessarily regular. But
first we state the following lemma:

\begin{lemma}
Let $G=Sl(n, \C),$ $B$ be the subgroup of $G$ consisting of upper
triangular matrices and $P\subset B$ be a parabolic subgroup of
block upper triangular matrices. Let $X$ be an algebraic $G$-variety
and $\pi:X\to G/P$ be an equivariant map. If $\mathring{X}$ is the
$B$-stable open dense Schubert cell of $G/P,$ then $\pi$ is a
trivial fibration over $\mathring{X}.$
\end{lemma}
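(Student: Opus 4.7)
The plan is to exploit the standard fact that the open Schubert cell $\mathring{X}$ of $G/P$ is a single orbit of a unipotent algebraic subgroup acting freely. Concretely, there exists a unipotent algebraic subgroup $V\subset G$ and a point $p_0\in\mathring{X}$ such that the orbit map $\phi: V\to\mathring{X}$, $v\mapsto v\cdot p_0$, is an isomorphism of varieties. In our setting ($G=Sl(n,\C)$, $B$ upper triangular, $P$ block upper triangular containing $B$), one may take $p_0=w_0^P\cdot eP$, where $w_0^P$ is the longest element among the minimal coset representatives $W^P$ of $W/W_P$, and $V$ a unipotent complement in the unipotent radical $U\subset B$ to the stabilizer $U\cap w_0^P P (w_0^P)^{-1}$ of $p_0$. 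This is nothing but the classical parametrization of the big cell of $G/P$, and the first step of the proof is to record it.

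Granted this identification, set $F:=\pi^{-1}(p_0)$ and define the morphism
$$\Phi: V\times F \longrightarrow \pi^{-1}(\mathring{X}),\qquad (v,x)\longmapsto v\cdot x.$$
This lands in $\pi^{-1}(\mathring{X})$ because $\pi$ is $G$-equivariant: for $x\in F$ one has $\pi(v\cdot x)=v\cdot\pi(x)=v\cdot p_0\in\mathring{X}$. For any $y\in\pi^{-1}(\mathring{X})$, write $v(y):=\phi^{-1}(\pi(y))\in V$; then $v(y)^{-1}\cdot y$ projects under $\pi$ to $p_0$, so lies in $F$, and the assignment $y\mapsto (v(y),\,v(y)^{-1}\cdot y)$ is a two-sided inverse to $\Phi$. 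Each of its constituents (namely $\pi$, the algebraic inverse $\phi^{-1}$, inversion in $V$, and the $G$-action on $X$) is a morphism, so $\Phi^{-1}$ is a morphism, and $\Phi$ is an isomorphism of varieties over $\mathring{X}\cong V$. Composing with $\phi$ one obtains the desired trivialization $\pi^{-1}(\mathring{X})\cong\mathring{X}\times F$, compatible with projection to $\mathring{X}$.

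The only genuinely non-formal ingredient is the initial identification of $\mathring{X}$ with a free unipotent orbit; once that is in place, everything else is mechanically forced by $G$-equivariance of $\pi$. I expect the existence and explicit description of the unipotent subgroup $V$ to be the main obstacle if one wants a fully self-contained argument, though for $G=Sl(n,\C)$ and $P$ block upper triangular it can be seen directly from the Bruhat decomposition, using that $U=(U\cap w_0^P U^- (w_0^P)^{-1})\cdot(U\cap w_0^P P(w_0^P)^{-1})$ as a variety, the first factor being the required $V$.
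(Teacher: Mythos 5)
Your proof is correct and follows essentially the same route as the paper: identify the big open cell with a free orbit of a unipotent subgroup, and then use $G$-equivariance of $\pi$ to write down the explicit trivialization $(v,x)\mapsto v\cdot x$ together with its inverse $y\mapsto(\pi(y),\,\phi^{-1}(\pi(y))^{-1}\cdot y)$. The only cosmetic difference is the choice of unipotent group: the paper uses the unipotent radical of $P$ acting on an arbitrary point of the cell, while you use the complement $U\cap w_0^P U^-(w_0^P)^{-1}$ acting on $w_0^P\cdot eP$; both are standard parametrizations of the big cell, so the arguments coincide.
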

\begin{proof}
Let $x_0\in \mathring{X}$ be any point and $U\subset B$ be the
unipotent radical of $P.$ The map $s:U\to \mathring{X}$ defined by
$g\mapsto g\cdot x_0$ is an isomorphism. So that, the map
\begin{align*}
\psi:\mathring{X}\times \pi^{-1}(x_0) &\to \pi^{-1}(\mathring{X})\\
(x, y)&\mapsto s(x)\cdot y
\end{align*}
is an isomorphism with inverse given by $\psi^{-1}(m)= (\pi(m),
s(\pi(m))^{-1} \cdot m).$
\end{proof}
\begin{teor}\label{caviedes}
Let $\lambda=(\lambda_1, \cdots, \lambda_n) \in \R^n.$ Suppose that
there are $i, j$ such that any difference of eigenvalues
$\lambda_{i'}-\lambda_{j'}$ is an integer multiple of
$\lambda_{i}-\lambda_{j},$ then
$$
\operatorname{Gwidth}(\mathcal{H}_\lambda, \w_\lambda) \leq
|\lambda_i-\lambda_j|$$
\end{teor}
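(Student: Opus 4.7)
The plan is to reduce the problem to the Grassmannian computation of Lemma \ref{gw} by exhibiting $\mathcal{H}_\lambda$ as a Grassmannian fiber bundle over a smaller partial flag manifold and transferring a Gromov-Witten computation across the fibration.

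First I would show that, after possibly swapping $i$ and $j$, the eigenvalues $\lambda_i,\lambda_j$ must come from \emph{consecutive} blocks of the partition defining $\mathcal{H}_\lambda$. Let $\mu_1>\mu_2>\cdots>\mu_l$ be the distinct values of $\lambda$, assume $\lambda_i=\mu_p$ and $\lambda_j=\mu_q$ with $p<q$, and use the telescoping identity
$$\mu_p-\mu_q=(\mu_p-\mu_{p+1})+(\mu_{p+1}-\mu_{p+2})+\cdots+(\mu_{q-1}-\mu_q).$$
By hypothesis each summand is a strictly positive integer multiple of $\mu_p-\mu_q$, so the equality forces $q=p+1$. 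Let $a=(a_0<\cdots<a_l=n)$ be the jumps of the $\lambda$-partition as in Section \ref{typeA}; I then consider the $Sl(n,\C)$-equivariant holomorphic projection
$$\pi:\mathcal{H}_\lambda\cong Fl(a;n)\longrightarrow Fl(a';n),\qquad a':=a\setminus\{a_p\},$$
whose fibers are Grassmannians $G\cong Gr(m_p,m_p+m_{p+1})$.

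Take $A\in H_2(\mathcal{H}_\lambda,\Z)$ to be the class of a projective line in one of these fibers. By the moment-map computation of Section \ref{typeA}, $\w_\lambda(A)=\mu_p-\mu_{p+1}=|\lambda_i-\lambda_j|$. The divisibility hypothesis makes every edge-sphere generator of $H_2(\mathcal{H}_\lambda,\Z)$ a positive integer multiple of $\mu_p-\mu_{p+1}$, so every effective class has $\w_\lambda$-area at least $\w_\lambda(A)$. Hence $A$ is $\w_\lambda$-indecomposable and Gromov-Witten invariants in class $A$ are well defined. Imitating the sketch of Theorem \ref{fibi}, for any $J_\lambda$-holomorphic $u:\cpi\to\mathcal{H}_\lambda$ in class $A$ the composite $\pi\circ u$ is holomorphic of class $\pi_*A=0$ and therefore constant, so $u$ is a line inside a single Grassmannian fiber. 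Thus $\mathcal{M}^*_A(\mathcal{H}_\lambda,J_\lambda)/PSL(2,\C)$ is a fiber bundle over $Fl(a';n)$ whose typical fiber is the space of lines $Fl(m_p-1,m_p+1;m_p+m_{p+1})$ in $G$.

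The main step is to compute a nonvanishing Gromov-Witten invariant on $\mathcal{H}_\lambda$ whose first constraint is a point. Pick a point $q\in\mathcal{H}_\lambda$ whose projection $\pi(q)$ lies in the big Schubert cell $\mathring{Y}\subset Fl(a';n)$, and let $\widetilde X\subset\mathcal{H}_\lambda$ be the subvariety obtained by sweeping the fiberwise Grassmannian constraint of Lemma \ref{gw} ($m_p$-planes containing a fixed line and contained in a fixed hyperplane inside each copy of $G$) over $Fl(a';n)$. A direct dimension count shows that $\operatorname{PD}[q]$ and $\operatorname{PD}[\widetilde X]$ have total degree equal to the real dimension of $\mathcal{M}^*_{A,2}(\mathcal{H}_\lambda,J_\lambda)$, so the invariant $\operatorname{GW}^{J_\lambda}_{A,2}(\operatorname{PD}[q],\operatorname{PD}[\widetilde X])$ is properly defined. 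The trivial-fibration lemma proved just before the theorem identifies $\pi^{-1}(\mathring{Y})\cong\mathring{Y}\times G$, and the fact that $J_\lambda$-holomorphic representatives of $A$ live in a single fiber localizes the enumeration $(\operatorname{ev}^2_{J_\lambda})^{-1}(\{q\}\times\widetilde X)$ to the fiber $\pi^{-1}(\pi(q))\cong G$. Lemma \ref{gw} applied to this fiber yields exactly one intersection point, so
$$\operatorname{GW}^{J_\lambda}_{A,2}(\operatorname{PD}[q],\operatorname{PD}[\widetilde X])=1,$$
and Remark \ref{rk1} then gives $\operatorname{Gwidth}(\mathcal{H}_\lambda,\w_\lambda)\le\w_\lambda(A)=|\lambda_i-\lambda_j|$. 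The main obstacle I expect is the careful choice of $\widetilde X$ together with the Bertini-Kleiman translation argument needed to verify transversality and make the fibrewise count genuinely compute the Gromov-Witten number on $\mathcal{H}_\lambda$; the trivial-fibration lemma is precisely the tool that makes this bookkeeping feasible.
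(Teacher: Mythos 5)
Your proposal is correct and follows essentially the same route as the paper: it fibers $Fl(a;n)$ over $Fl(a';n)$ with Grassmannian fibers, takes $A$ to be the fiber line class (indecomposable by the divisibility hypothesis), confines degree-$A$ holomorphic curves to single fibers, and reduces the Gromov-Witten count to Lemma \ref{gw} via the trivial-fibration lemma over the big Schubert cell. The only cosmetic differences are your telescoping argument forcing the two relevant eigenvalue blocks to be consecutive (the paper instead simply reorders the components of $\lambda$) and your somewhat less explicit description of the swept constraint $\widetilde X$, which the paper pins down concretely as the Schubert variety $X_{w'\tilde{w}}$.
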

\begin{proof}
The idea of the proof is, as before, to prove that a certain
Gromov-Witten invariant, with one of its constraints being a point,
is different from zero.

Let us assume that $\lambda\in \R^n$ is of the form
$$
\lambda_1=\cdots=\lambda_{m_1},
\lambda_{m_1+1}=\cdots=\lambda_{m_1+m_2}, \cdots,
\lambda_{m_1+m_2+\cdots+m_{l-1}+1}=\cdots=\lambda_{n},
$$
where $1\leq m_1, m_2, \cdots, m_{l-1}, m_l \leq n$ are integers
such that $m_1+m_2+\cdots+m_{l-1}+m_l=n,$ and $\lambda_{m_1},
\lambda_{m_1+m_2}, \cdots, \lambda_{n}$ are pairwise different real
numbers. After reordering the components de $\lambda$ if necessary,
we assume that $i=m_1+1, j=m_1$ so that
$\lambda_{m_1+1}-\lambda_{m_1}$ is an integer multiple of any
difference of the form $\lambda_{i'}-\lambda_{j'}.$

We know that $\mathcal{H}_\lambda \simeq Fl(a; n),$ where $a$ is the
strictly increasing sequence of integers
$$
0=a_0<a_1<\cdots < a_l=n
$$
defined by $a_k=\sum_{r=1}^km_r,$ for $1\leq k \leq l.$

Let $a'$ be the sequence of integer numbers
$$
a_2<\cdots< a_l = n,
$$
and $Fl(a'; n)$ be the corresponding flag manifold. Let $W_a \subset
S_n$ be the subgroup generated by the simple transpositions $s_i=(i,
i+1)$ for $i\notin \{a_1, \cdots, a_l\}.$ Let $W^a \subset S_n$ be
the set of smallest coset representatives of $S_n/W_a.$ Likewise, we
define $W_{a'}$ and $W^{a'}.$ Schubert varieties of $Fl(a; n)$ and
$Fl(a'; n)$ are parametrized by $W^a$ and $W^{a'},$ respectively. To
avoid confusions, we will denote the Schubert varieties in $Fl(a;
n)$ by $X_{\bullet}$ and the Schubert varieties in $Fl(a';n)$ by
$X^{'}_{\bullet}.$ A similar thing will be done with the Schubert
cells.

For the permutations $(a_1, a_1+1)\in W^a,$ let $X_{(a_1, a_1+1)}$
be the standard Schubert variety in $Fl(a; n)$ associated to it and
let $A$ be the the fundamental class of this Schubert variety. Note
that, by assumption, $\w_\lambda(A)=|\lambda_{m_1+1}-\lambda_{m_1}|$
is a generator of the cyclic image
$\w_\lambda(H_2(\mathcal{H}_\lambda, \Z)),$ which implies that $A$
is a $\w_\lambda$-indecomposable homology class. As a consequence,
the Gromov-Witten invariant $\operatorname{GW}_{A, k}$ is well
defined.

We have a holomorphic projection
\begin{align*}
\pi:Fl(a; n) &\to Fl(a'; n)\\V^{a_1}\subset V^{a_2} \subset \cdots
\subset V^{a_l}= \C^n &\mapsto V^{a_2} \subset \cdots \subset
V^{a_l}= \C^n
\end{align*}
whose fiber is isomorphic to a Grassmanian manifold $G(a_1, a_2).$
If $G(a_1, a_2)$ is isomorphic to $\cpi,$ we are in the case of
Theorem \ref{fibi}, and we are done.

The set of minimal length representatives $W_{a}^{a'}$ of $W_{a'}$
on $W_a$ parameterizes Schubert varieties on a fiber of $\pi$. Note
that $(a_1,a_{1+1})\in W_{a}^{a'},$ so in particular $\pi_{*}(A)=0.$

Let $\tilde{w}$ be the permutation in $W_{a}^{a'}$ that represents
in a fiber a Grassmannian manifold isomorphic to $G(a_1-1, a_2-2).$
Let $w'$ be the longest element in $W^{a'}.$ The Schubert cell
$C^{'}_{w'}$ is open and dense in $Fl(a'; n).$ By the previous
Lemma, the restriction map
$$\pi|_{X_{w'\tilde{w}}}:X_{w'\tilde{w}}\to Fl(a, n)$$ is a trivial
fibration over $C_{w'}^{'}$ with fiber isomorphic to $G(a_1-1,
a_2-2).$

We now want to count the number of holomorphic curves of degree $A$
that passes through a generic point $p \in Fl(a; n)$ and
$X_{w'\tilde{w}}\subset Fl(a; n).$ Let $u:\cpi\to Fl(a; n)$ be one
of such holomorphic curves. The composition $\pi\circ u$ is
holomorphic and $(\pi\circ u)_*[\cpi]=\pi_{*}(A)=0.$ Since $Fl(a';
n)$ is a compact and connected K\"ahler manifold, the map $\pi\circ
u$ is constant, which means that the image of $u:\cpi\to Fl(a; n)$
lies entirely in the fiber $\pi^{-1}(p)\cong G(a_1, a_2)$ of
$\pi:Fl(a; n) \to Fl(a'; n).$ Moreover, $u:\cpi \to \pi^{-1}(p)\cong
G(a_1, a_2)\subset Fl(a; n)$ is a holomorphic map of degree one,
i.e., it is a projective line of the fiber $\pi^{-1}(p)\cong G(a_1,
a_2).$ If $\pi(p)\in C_{w'},$ then the fiber $\pi^{-1}(p)$
intersects $X_{w'\tilde{w}}$ in a variety isomorphic to $G(a_1-1,
a_2-2).$ Since there is just one projective line passing through a
generic point and  $G(a_1-1, a_2-2)$ in $G(a_1, a_2)$ (by Lemma
\ref{gw}), we conclude that
$$
\operatorname{GW}_{A, 2}^{J_\lambda}(\operatorname{PD}[p],
\operatorname{PD}[X_{w'\tilde{w}}])=1.
$$
Thus, by Theorem \ref{nsq} and Remark \ref{rk1},
$$
\operatorname{Gwidth}(\mathcal{H}_\lambda, \w_\lambda)\leq
\w_\lambda(A)=|\lambda_{m_1+1}-\lambda_{m_1}|.
$$
\end{proof}

\renewcommand{\refname}{Bibliography}
\bibliographystyle{plain}
\bibliography{biblo}
\nocite{*}

\end{document}